\theoremstyle{definition}
\newtheorem{theorem}{Theorem}[section]
\newtheorem{definition}[theorem]{Definition}
\newtheorem{lemma}[theorem]{Lemma}
\newtheorem{remark}[theorem]{Remark}
\newcommand{\intint}{\int \hspace{-1.5mm} \int}
\newcommand{\intintint}{\int \hspace{-1.5mm} \int\hspace{-1.5mm} \int}
\newcommand{\inttx}{\intint_{\Omega_T}}
\newcommand{\eps}{\varepsilon}
\title{Energy conservation in the limit of filtered solutions for the 2D Euler equations}
\author{Takeshi Gotoda \footnote{Department of Mathematical and Computing Science, Tokyo Institute of Technology, 2-12-1 Ookayama, Meguro-ku, Tokyo, Japan, E-mail: gotoda@c.titech.ac.jp} }
\date{}
\begin{document}

%%%%%%%%%%%%%%%%%%%%%%%%%%%%%%%%%%%%%%%%%%%%%%%%%%%%%%%%%%%%%%%%%%%%%%%%%%%%%%%%%%%%%%%%

\maketitle

\begin{abstract}
We consider energy conservation in a two-dimensional incompressible and inviscid flow through weak solutions of the filtered-Euler equations, which describe a regularized Euler flow based on a spatial filtering. We show that the energy dissipation rate for the filtered weak solution with vorticity in $L^p$, $p > 3/2$ converges to zero in the limit of the filter parameter. Although the energy defined in the whole space is not finite in general, we formally extract a time-dependent part, which is well-defined for filtered solutions, from the energy and define the energy dissipation rate as its time-derivative. Moreover, the limit of the filtered weak solution is a weak solution of the Euler equations and it satisfies a local energy balance in the sense of distributions. For the case of $p = 3/2$, we find the same result as $p > 3/2$ by assuming Onsager's critical condition for the family of the filtered solutions.

\end{abstract}

%%%%%%%%%%%%%%%%%%%%%%%%%%%%%%%%%%%%%%%%%%%%%%%%%%%%%%%%%%%%%%%%%%%%%%%%%%%%
%%%%%%%%%%  Introduction  %%%%%%%%%%
%%%%%%%%%%%%%%%%%%%%%%%%%%%%%%%%%%%%%%%%%%%%%%%%%%%%%%%%%%%%%%%%%%%%%%%%%%%%

\section{Introduction} \label{sec:intro}

According to the Kolmogorov theory \cite{Kolmogorov}, energy dissipation in inviscid flows is closely related to three-dimensional (3D) turbulence. This implies that energy dissipating solutions of the 3D Euler equations are a key to comprehension of turbulent dynamics. Onsager conjectured that weak solutions of the 3D Euler equations acquiring a H\"{o}lder continuity with the order greater than $1/3$ conserve the energy, and the energy dissipation could occur for the order less than $1/3$ \cite{Eyink, Onsager, Shvydkoy}; Onsager's conjecture has been shown mathematically \cite{Buckmaster, Cheskidov(a), Constantin}. For 2D flows, the Kraichnan-Leith-Batchelor theory \cite{Batchelor, Kraichnan, Leith} indicates that two inertial ranges corresponding to a backward energy cascade and a forward enstrophy cascade appear in turbulent flows, which asserts that energy conservation in inviscid flows is still important for the 2D turbulent problem. In this paper, we study energy conserving solutions of an inviscid model.  Motions of incompressible and inviscid flows are often described by the 2D Euler equations:
\begin{equation}
\partial_t \boldsymbol{u} + (\boldsymbol{u} \cdot \nabla) \boldsymbol{u} + \nabla p = 0, \qquad \nabla \cdot \boldsymbol{u} = 0,  \label{eq:Euler}
\end{equation}
where $\boldsymbol{u}=\boldsymbol{u}(\boldsymbol{x}, t) =  ( u_1(\boldsymbol{x}, t), u_2(\boldsymbol{x}, t))$ is the fluid velocity field and $p=p(\boldsymbol{x}, t)$ is the scalar pressure. A classical weak solution for the initial value problem of \eqref{eq:Euler} with $\boldsymbol{u}(\boldsymbol{x}, 0) = \boldsymbol{u}_0(\boldsymbol{x})$ is defined as follows, see \cite{Diperna(b)}.

%%%%%%%%%%  Definition of weak solutions for Euler eqs  %%%%%%%%%%
\begin{definition} \label{def:Euler}
{\it A velocity field $\boldsymbol{u} \in L^\infty(0, T; L^2_{\mathrm{loc}}(\mathbb{R}^2))$ vanishing at infinity is a weak solution of \eqref{eq:Euler} with initial data $\boldsymbol{u}_0$ provided that
\begin{itemize}
\item[$\mathrm{(i)}$] for any vector $\Psi \in C_c^\infty(\Omega_T)$ with $\nabla \cdot \Psi = 0$,
\begin{equation*}
\inttx \left( \partial_t \Psi \cdot \boldsymbol{u} + \nabla \Psi : \boldsymbol{u} \otimes \boldsymbol{u} \right) d\boldsymbol{x} dt = 0,
\end{equation*}
where $\Omega_T \equiv \mathbb{R}^2 \times (0,T)$, $\boldsymbol{v} \otimes \boldsymbol{v} = (v_i v_j)$, $\nabla \Psi = (\partial_j \psi_i)$ and $A : B = \sum_{i,j} a_{ij} b_{ij}$,
\item[$\mathrm{(ii)}$] for any scalar $\psi \in C_c^\infty(\Omega_T)$, $\inttx \nabla \psi \cdot \boldsymbol{u} d\boldsymbol{x} dt = 0$,
\item[$\mathrm{(iii)}$] $\boldsymbol{u} \in \mathrm{Lip}([0,T];H_{\mathrm{loc}}^{-L}(\mathbb{R}^2))$ for some $L > 0$ and $\boldsymbol{u}(\cdot, 0) = \boldsymbol{u}_0(\cdot)$ in $H_{\mathrm{loc}}^{-L}(\mathbb{R}^2)$,
\end{itemize}
}
\label{def-sol-velocity}
\end{definition}

Set vorticity $\omega \equiv \operatorname{curl} \boldsymbol{u} =  \partial_{x_1} u_2 - \partial_{x_2} u_1$. Taking the curl of \eqref{eq:Euler}, we obtain a transport equation for $\omega$:
\begin{equation}
\partial_t \omega + (\boldsymbol{u} \cdot \nabla) \omega = 0  \label{eq:vEuler}
\end{equation}
with initial vorticity $\omega_0 \equiv \operatorname{curl} \boldsymbol{u}_0$. The velocity $\boldsymbol{u}$ is recovered from $\omega$ via the Biot-Savart law:
\begin{equation}
\boldsymbol{u}(\boldsymbol{x},t) = \left( \boldsymbol{K} \ast \omega \right)(\boldsymbol{x},t) \equiv \int_{\mathbb{R}^2} \boldsymbol{K}(\boldsymbol{x} - \boldsymbol{y}) \omega(\boldsymbol{y},t) d \boldsymbol{y}, \label{eq:BSlaw}
\end{equation}
where $\boldsymbol{K}$ is defined by
\begin{equation*}
\boldsymbol{K}(\boldsymbol{x}) \equiv \nabla^\perp G(\boldsymbol{x}) = \frac{1}{2 \pi} \frac{\boldsymbol{x}^\perp}{\left| \boldsymbol{x} \right|^2}, \qquad G(\boldsymbol{x}) \equiv \frac{1}{2 \pi} \log{|\boldsymbol{x}|}
\end{equation*}
with $\nabla^\perp = (- \partial_{x_2}, \partial_{x_1})$ and $\boldsymbol{x}^\perp = (- x_2, x_1)$. Note that $G$ is a fundamental solution to the 2D Laplacian. In this paper, we focus on weak solutions of (\ref{eq:vEuler}) with $\omega_0 \in L^1(\mathbb{R}^2) \cap L^p(\mathbb{R}^2)$, $p > 1$; the existence of a global weak solution has been established for $1 < p \leq \infty$ and the uniqueness holds only for $p = \infty$ \cite{Diperna(b), Marchioro, Yudovich}. As it is mentioned in \cite{Lopes}, a weak solution for $\omega_0 \in L^1(\mathbb{R}^2) \cap L^p(\mathbb{R}^2)$, $p \geq 4/3$ satisfies \eqref{eq:vEuler} in the following sense.
\begin{equation*}
\inttx \left( \partial_t \psi(\boldsymbol{x},t) + \nabla \psi (\boldsymbol{x},t) \cdot \boldsymbol{u}(\boldsymbol{x},t) \right) \omega(\boldsymbol{x},t) d\boldsymbol{x} dt = 0
\end{equation*}
for any $\psi \in C_c^\infty(\Omega_T)$. For a weak solution of \eqref{eq:vEuler}, we consider the kinetic energy,
\begin{equation}
\frac{1}{2}\int |\boldsymbol{u}(\boldsymbol{x},t)|^2 d\boldsymbol{x}, \label{energy}
\end{equation}
where $\boldsymbol{u}$ is given by \eqref{eq:BSlaw}, though \eqref{energy} is not finite on the entire space $\mathbb{R}^2$ except for specific vorticity, see \cite{Diperna(b)} for an example. Cheskidov et al. \cite{Cheskidov(b)} have shown that a weak solution of the 2D Euler equations on the torus $\mathbb{T}^2$, for which \eqref{energy} is finite, conserves the energy for $\omega_0 \in L^{3/2}(\mathbb{T}^2)$ by using a spatial mollification. They have also shown energy conservation for the weak solution obtained by an inviscid limit of the 2D Navier-Stokes equations for $\omega_0 \in L^p(\mathbb{T}^2)$, $p > 1$, which is called a {\it physically realizable weak solution}. In this paper, we consider another regularization of the Euler equations, which we call the {\it filtered-Euler equations}, and show energy conservation on $\mathbb{R}^2$ in the limit of the regularization parameter. Although the energy is still infinite for the filtered inviscid model, we extract a finite time-dependent term formally and see the convergence of its time-derivative. We also show that the weak solution of the 2D filtered-Euler equations converges weakly to a weak solution of the 2D Euler equations and satisfies a local energy balance equation.

%%%%%%%%%%%%%%%%%%%%%%%%%%%%%%%%%%%%%%%%%%%%%%%%%%%
%%%%%%%%%%  2D Filtered Euler equations  %%%%%%%%%%
%%%%%%%%%%%%%%%%%%%%%%%%%%%%%%%%%%%%%%%%%%%%%%%%%%%

The filtered-Euler equations are given by
\begin{equation}
\partial_t \boldsymbol{v}^\varepsilon + (\boldsymbol{u}^\varepsilon \cdot \nabla) \boldsymbol{v}^\varepsilon - (\nabla \boldsymbol{v}^\varepsilon)^T \cdot \boldsymbol{u}^\varepsilon + \nabla p^\varepsilon = 0, \qquad \nabla \cdot \boldsymbol{v}^\varepsilon = 0,  \label{FEE}
\end{equation}
where $\boldsymbol{v}^\varepsilon$ and $p^\varepsilon$ denote the velocity field and the generalized pressure, respectively. Another field $\boldsymbol{u}^\varepsilon$ is a spatially filtered velocity of $\boldsymbol{v}^\varepsilon$, that is,
\begin{equation}
\boldsymbol{u}^\varepsilon(\boldsymbol{x},t) = \left( h^\varepsilon \ast \boldsymbol{v}^\varepsilon \right) (\boldsymbol{x},t), \qquad h^\varepsilon(\boldsymbol{x}) \equiv \frac{1}{\varepsilon^2} h \left( \frac{\boldsymbol{x}}{\varepsilon} \right), \qquad \eps > 0, \label{Fvelo}
\end{equation}
in 2D flows. Refer to \cite{Foias, Holm} for the derivation of the filtered-Euler equations through the filtering (\ref{Fvelo}). Here, $h \in L^1(\mathbb{R}^2)$ is a radial function satisfying $\int_{\mathbb{R}^2} h(\boldsymbol{x}) d\boldsymbol{x} = 1$, which we call the {\it filter function}. For simplicity, we assume $h \in C^1_0(\mathbb{R}^2 \setminus \{ 0 \})$: a continuously differentiable function that vanishes at infinity and may have a singularity at the origin. Note that, considering specific filter functions, we obtain two well-known regularizations: the Euler-$\alpha$ model and the vortex blob model, see \cite{G.1, Holm}. In particular, the Euler-$\alpha$ equations and their viscous extension, the Navier-Stokes-$\alpha$ equations, are considered as physically relevant models of turbulent flows \cite{Chen(a), Chen(b), Foias, Foias(a), Lunasin, Mohseni}.

Taking the $\operatorname{curl}$ of (\ref{FEE}) with the incompressible condition, we obtain the transport equation for $q^\varepsilon \equiv \operatorname{curl} \boldsymbol{v}^\varepsilon$ convected by $\boldsymbol{u}^\varepsilon$,
\begin{equation}
\partial_t q^\varepsilon + (\boldsymbol{u}^\varepsilon \cdot \nabla) q^\varepsilon = 0, \qquad \boldsymbol{u}^\varepsilon = \boldsymbol{K}^\varepsilon \ast q^\varepsilon, \qquad \boldsymbol{K}^\varepsilon \equiv \boldsymbol{K} \ast h^\varepsilon. \label{VFEE}
\end{equation}
The Biot-Savart law for the filtered vorticity $\omega^\varepsilon \equiv \operatorname{curl} \boldsymbol{u}^\varepsilon$ gives $\boldsymbol{u}^\varepsilon = \boldsymbol{K} \ast \omega^\varepsilon$ and we have $\nabla \cdot \boldsymbol{u}^\varepsilon = 0$ and $\omega^\varepsilon = h^\varepsilon \ast q^\varepsilon$ when the convolution commutes with the differential operator. The Lagrangian flow map $\boldsymbol{\eta}^\varepsilon$ associated with $\boldsymbol{u}^\varepsilon$ is given by
\begin{equation}
\partial_t \boldsymbol{\eta}^\varepsilon(\boldsymbol{x}, t) = \boldsymbol{u}^\varepsilon \left( \boldsymbol{\eta}^\varepsilon(\boldsymbol{x}, t), t \right), \qquad  \boldsymbol{\eta}^\varepsilon(\boldsymbol{x}, 0) = \boldsymbol{x}. \label{flow-map}
\end{equation}
The preceding study \cite{G.1} has shown that the 2D filtered-Euler equations have a unique global weak solution for $q_0 \in \mathcal{M}(\mathbb{R}^2)$, the space of finite Radon measures on $\mathbb{R}^2$, under some additional conditions for $h$. More precisely, we have a unique solution,
\begin{equation}
\boldsymbol{\eta}^\eps \in C^1([0,T]; \mathscr{G}), \qquad  q^\eps \in C_w([0,T]; \mathcal{M}(\mathbb{R}^2)), \qquad \boldsymbol{u}^\eps \in C([0,T]; C_0(\mathbb{R}^2)),  \label{sol:VFEE}
\end{equation}
to \eqref{VFEE} and \eqref{flow-map} with $q_0 \in \mathcal{M}(\mathbb{R}^2)$, where $\mathscr{G}$ denotes the group of all homeomorphisms of $\mathbb{R}^2$ preserving the Lebesgue measure and $C_w$ does the weak continuity. Note that $\boldsymbol{\eta}^\eps$, $q^\eps$ and $\boldsymbol{u}^\eps$ are related to each other: $q^\varepsilon(\boldsymbol{x}, t) = q_0 \left( \boldsymbol{\eta}^\varepsilon(\boldsymbol{x}, - t) \right)$ and $\boldsymbol{u}^\eps = \boldsymbol{K}^\eps \ast q^\eps$. The weak solution \eqref{sol:VFEE} satisfies (\ref{VFEE}) in the sense that
\begin{equation*}
\inttx \left( \partial_t \psi(\boldsymbol{x},t)  + \nabla \psi(\boldsymbol{x},t) \cdot \boldsymbol{u}^\varepsilon(\boldsymbol{x},t) \right) q^\varepsilon(\boldsymbol{x},t) d\boldsymbol{x} dt = 0
\end{equation*}
for any $\psi\in C_0^\infty(\Omega_T)$, the space of smooth functions vanishing at infinity in $\mathbb{R}^2$ and the boundary of $(0,T)$. We mention the convergence of weak solutions of the 2D filtered-Euler equations to those of the 2D Euler equations in the $\eps \rightarrow 0$ limit. For $q_0 \in L^1(\mathbb{R}^2) \cap L^\infty(\mathbb{R}^2)$, the weak solution of \eqref{VFEE} strongly converges to a unique global weak solution of \eqref{eq:Euler} with $\omega_0 = q_0$: the filtered flow map $\boldsymbol{\eta}^\eps$ converges to a flow map induced by the 2D Euler equations \cite{G.1}. For $q_0 \in L^1(\mathbb{R}^2) \cap L^p(\mathbb{R}^2)$, $1 < p < \infty$, as we show later, the filtered weak solution converges weakly to a weak solution of \eqref{eq:Euler}, which is constructed in \cite{Diperna(b)}. The convergence result has been extended to initial vorticity in $\mathcal{M}(\mathbb{R}^2) \cap H^{-1}_{\mathrm{loc}}(\mathbb{R}^2)$ with a distinguished sign \cite{G.2}.

Throughout this paper, we use the following notations. A open ball is denoted by $B_r \equiv \{ \boldsymbol{x} \in \mathbb{R}^2 \mid |\boldsymbol{x}| < r  \}$. For a set $A \subset \mathbb{R}^2$, $\chi_{A}$ denotes the indicator function and $|A|$ does the Lebesgue measure.  For the exponent $p$ in the Lebesgue or Sobolev space, $p'$ is the conjugate exponent of $p$, that is, $1 = 1/p + 1/{p'}$ for $p \in [1, \infty]$, and $p^\ast \in (2,\infty)$ is defined by $ p^\ast \equiv 2p/(2 - p)$, that is, $1/{p^\ast} = 1/p -1/2$ for $p \in (1, 2)$. We also introduce the weight function $w_{\alpha}(\boldsymbol{x}) \equiv  |\boldsymbol{x}|^\alpha$ for $\boldsymbol{x} \in \mathbb{R}^2$. Note that we omit the domain in the norm when it is the entire space $\mathbb{R}^2$. As for convergence, $f_n \rightarrow f$ denotes strong convergence and $f_n \rightharpoonup f$ does weak convergence in Banach spaces.

%%%%%%%%%%  Dissipation rates of enstrophy and energy for filtered Euler model  %%%%%%%%%%
\section{Main results}

\subsection{Energy dissipation rate} \label{Subsec:FEE-dissipation}
Before deriving the energy dissipation rate for the filtered-Euler equations, we see basic properties of a weak solution to \eqref{VFEE} with $q_0 \in \mathcal{M}(\mathbb{R}^2)$. Considering the Lagrangian flow map $\boldsymbol{\eta}^\eps$, we find $\| q^\varepsilon(\cdot, t) \|_{\mathcal{M}} = \| q_0 \|_{\mathcal{M}}$ and $\{ q^\eps \}$ is uniformly bounded in $C([0,T]; \mathcal{M}(\mathbb{R}^2))$. We also have
\begin{equation*}
\| \omega^\eps(\cdot, t) \|_{L^p} \leq \| h^\eps \|_{L^p} \| q_0 \|_{\mathcal{M}} = \eps^{-2(1 - 1/p)} \| h \|_{L^p} \| q_0 \|_{\mathcal{M}}
\end{equation*}
for any $1 \leq p < \infty$, which implies that $\{ \omega^\varepsilon \} \subset C([0,T]; L^1(\mathbb{R}^2))$ is uniformly bounded. As for the filtered velocity $\boldsymbol{u}^\eps$, it follows from
\begin{equation*}
\boldsymbol{u}^\eps = \boldsymbol{K} \ast \omega^\eps = (\boldsymbol{K} \chi_{B_1}) \ast \omega^\eps + (\boldsymbol{K}\chi_{\mathbb{R}^2 \setminus B_1}) \ast \omega^\eps,
\end{equation*}
that
\begin{equation}
\| \boldsymbol{u}^\eps (\cdot, t) \|_{L^\infty} \leq C (\| \omega^\eps (\cdot, t) \|_{L^r} + \| \omega^\eps (\cdot, t) \|_{L^1} ) \leq C \eps^{-2(1 - 1/r)} \| q_0 \|_{\mathcal{M}}    \label{est:u-eps-infty}
\end{equation}
for any $2 < r \leq \infty$. As we see below, the above estimates give well-posedness of the energy dissipation rate.

We define energy for a filtered weak solution by replacing $\boldsymbol{u}$ with $\boldsymbol{u}^\eps$ in \eqref{energy}. However, this energy is not finite in general since the filtered Biot-Savart law, $\boldsymbol{u}^\eps = \boldsymbol{K}^\eps \ast q^\eps$, implies $\boldsymbol{u}^\eps(\boldsymbol{x}) \sim |\boldsymbol{x}|^{-1}$ as $|\boldsymbol{x}| \rightarrow \infty$. We now see that a formal calculation divides the energy into two parts: a time-invariant term and a time-dependent term. In particular, we focus on the time-dependent term that is well-defined for weak solutions of \eqref{VFEE} with $q_0 \in \mathcal{M}(\mathbb{R}^2)$. We start by substituting the filtered Biot-Savart law into the energy:
\begin{equation*}
\frac{1}{2}\int_{\mathbb{R}^2} \left| \boldsymbol{u}^\varepsilon (\boldsymbol{x},t)  \right|^2 d \boldsymbol{x} = \frac{1}{2} \intintint \boldsymbol{K}^\varepsilon(\boldsymbol{x} - \boldsymbol{y}) \cdot \boldsymbol{K}^\varepsilon(\boldsymbol{x} - \boldsymbol{z}) q^\varepsilon(\boldsymbol{y},t) q^\varepsilon(\boldsymbol{z},t)  d \boldsymbol{y} d \boldsymbol{z} d \boldsymbol{x}.
\end{equation*}
Since we have $\boldsymbol{K}^\varepsilon = \nabla^\perp G^\varepsilon$ and $\Delta G^\varepsilon = h^\varepsilon$, a formal calculation yields
\begin{align*}
\frac{1}{2} \int_{\mathbb{R}^2} \left| \boldsymbol{u}^\varepsilon (\boldsymbol{x},t)  \right|^2 d \boldsymbol{x} &= - \frac{1}{2} \intintint h^\varepsilon(\boldsymbol{x} - \boldsymbol{y}) \cdot G^\varepsilon(\boldsymbol{x} - \boldsymbol{z}) q^\varepsilon(\boldsymbol{y},t) q^\varepsilon(\boldsymbol{z},t) d \boldsymbol{x} d \boldsymbol{y} d \boldsymbol{z} \\
&= - \frac{1}{2} \intint \left( h^\varepsilon \ast G^\varepsilon \right)(\boldsymbol{y} - \boldsymbol{z}) q^\varepsilon(\boldsymbol{y},t) q^\varepsilon(\boldsymbol{z},t) d \boldsymbol{y} d \boldsymbol{z}.
\end{align*}
We introduce the following quantity.
\begin{equation*}
\mathscr{H}^\varepsilon \equiv - \frac{1}{2} \intint G^\varepsilon(\boldsymbol{x} - \boldsymbol{y}) q^\varepsilon(\boldsymbol{x},t) q^\varepsilon(\boldsymbol{y},t) d \boldsymbol{x} d \boldsymbol{y},
\end{equation*}
which is called the {\it pseudo-energy}. Although $\mathscr{H}^\varepsilon$ is not finite in general, considering specific vorticity, for example, initial vorticity of compact support, we find that $\mathscr{H}^\varepsilon$ is a conserved quantity. Indeed, for the point-vortex initial vorticity, $\mathscr{H}^\varepsilon$ gives the Hamiltonian of the {\it filtered point-vortex system}, see \cite{G.3}. On the basis of the above calculation, we divide the energy into two parts as follows.
\begin{equation*}
\frac{1}{2} \int_{\mathbb{R}^2} \left| \boldsymbol{u}^\varepsilon (\boldsymbol{x},t)  \right|^2 d \boldsymbol{x} = \mathscr{H}^\varepsilon + \mathscr{E}(t),
\end{equation*}
\begin{equation*}
\mathscr{E}(t) \equiv - \frac{1}{2} \intint H_G^\varepsilon (\boldsymbol{x} - \boldsymbol{y}) q^\varepsilon(\boldsymbol{x},t) q^\varepsilon(\boldsymbol{y},t) d \boldsymbol{x} d \boldsymbol{y},
\end{equation*}
where
\begin{equation}
H_G^\eps(\boldsymbol{x}) \equiv \left( h^\varepsilon \ast G^\varepsilon \right)(\boldsymbol{x}) - G^\varepsilon (\boldsymbol{x}) = \left( h^\varepsilon \ast \left(G^\varepsilon - G \right) \right)(\boldsymbol{x}). \label{def-H_G-eps}
\end{equation}
Refer to Appendix~\ref{appendix_A} for detailed properties of $H_G^\eps$ and $\nabla H_G^\eps$. As we see in Appendix~\ref{appendix_A}, $H_G^\eps$ belongs to $C_0(\mathbb{R}^2)$ for any fixed $\eps$, so that we find
\begin{equation*}
|\mathscr{E}(t)| \leq \| H_G^\eps \|_{L^\infty} \| q^\eps(\cdot, t) \|_{\mathcal{M}}^2  = C_\eps \| q_0 \|_{\mathcal{M}}^2,
\end{equation*}
where $C_\eps$ is the constant depending on $\eps$. Thus, the time-dependent term $\mathscr{E}(t)$ is finite for any $q_0 \in \mathcal{M}(\mathbb{R}^2)$. Since we have
\begin{equation*}
\intint H_G^\varepsilon (\boldsymbol{x} - \boldsymbol{y}) q^\varepsilon(\boldsymbol{x},t) q^\varepsilon(\boldsymbol{y},t) d \boldsymbol{y} d \boldsymbol{x} = \intint H_G^\varepsilon (\boldsymbol{\eta}^\varepsilon(\boldsymbol{x}, t) - \boldsymbol{\eta}^\varepsilon(\boldsymbol{y}, t)) q_0(\boldsymbol{x}) q_0(\boldsymbol{y}) d \boldsymbol{x} d \boldsymbol{y},
\end{equation*}
the time-derivative of $\mathscr{E}(t)$ is given by
\begin{align*}
\frac{\mbox{d}}{\mbox{d}t} \mathscr{E}(t) &= - \frac{1}{2} \intint (\nabla H_G^\varepsilon) (\boldsymbol{\eta}^\varepsilon(\boldsymbol{x}, t) - \boldsymbol{\eta}^\varepsilon(\boldsymbol{y}, t)) \\
&\hspace{20mm} \cdot \left( \boldsymbol{u}^\varepsilon(\boldsymbol{\eta}^\varepsilon(\boldsymbol{x}, t),t) - \boldsymbol{u}^\varepsilon(\boldsymbol{\eta}^\varepsilon(\boldsymbol{y}, t),t) \right) q_0(\boldsymbol{x}) q_0(\boldsymbol{y}) d \boldsymbol{x} d \boldsymbol{y}\\
&= - \frac{1}{2} \intint (\nabla H_G^\varepsilon) (\boldsymbol{x} - \boldsymbol{y}) \cdot \left( \boldsymbol{u}^\varepsilon(\boldsymbol{x}, t) - \boldsymbol{u}^\varepsilon(\boldsymbol{y}, t) \right) q^\varepsilon(\boldsymbol{x},t) q^\varepsilon(\boldsymbol{y},t) d \boldsymbol{x} d \boldsymbol{y}.
\end{align*}
Hence, we define the energy dissipation rate by
\begin{equation*}
\mathscr{D}_E^\varepsilon(t) \equiv - \frac{1}{2} \intint (\nabla H_G^\varepsilon) (\boldsymbol{x} - \boldsymbol{y}) \cdot \left( \boldsymbol{u}^\varepsilon(\boldsymbol{x}, t) - \boldsymbol{u}^\varepsilon(\boldsymbol{y}, t) \right) q^\varepsilon(\boldsymbol{x},t) q^\varepsilon(\boldsymbol{y},t) d \boldsymbol{x} d \boldsymbol{y}.
\end{equation*}
It follows from $\nabla H_G^\varepsilon \in C_0(\mathbb{R}^2)$, see Appendix~\ref{appendix_A}, and \eqref{est:u-eps-infty} that
\begin{equation*}
|\mathscr{D}_E^\varepsilon(t)| \leq \| \nabla H_G^\varepsilon \|_{L^\infty} \| \boldsymbol{u}^\varepsilon(\cdot, t) \|_{L^\infty} \| q^\eps (\cdot, t) \|_{\mathcal{M}}^2 \leq C_\eps \| q_0 \|_{\mathcal{M}}^3,
\end{equation*}
and thus $\mathscr{D}_E^\varepsilon$ is well-defined for weak solutions of \eqref{VFEE} with $q_0 \in \mathcal{M}(\mathbb{R}^2)$.

%%%%%%%%%%%%%%%%%%%%%%%%%%%%%%%%%%%%
%%%%%%%%%%  Main Theorem  %%%%%%%%%%
%%%%%%%%%%%%%%%%%%%%%%%%%%%%%%%%%%%%
\subsection{Main theorems} \label{Main}

As we see in Section~\ref{Subsec:FEE-dissipation}, the energy dissipation rate $\mathscr{D}_E^\varepsilon(t)$ is bounded for any weak solution of \eqref{VFEE} with $q_0 \in \mathcal{M}(\mathbb{R}^2)$. However, the boundedness of $\mathscr{D}_E^\varepsilon$ depends on the filter parameter $\eps$ and, in the $\eps \rightarrow 0$ limit, $\mathscr{D}_E^\varepsilon$ is not finite in general. Our concern is the set of initial vorticity that provides the uniform boundedness of $\{ \mathscr{D}_E^\varepsilon \}$. In this paper, we consider weak solutions of \eqref{VFEE} with $q_0 \in L^1(\mathbb{R}^2) \cap L^p(\mathbb{R}^2)$, and give a sufficient condition for $p$ that yields energy conservation: $\mathscr{D}_E^\varepsilon (t) \rightarrow 0$ as $\eps \rightarrow 0$. In the following theorems, we assume that the filter function $h$ is sufficiently regular, so that \eqref{VFEE} has a unique global weak solution for $q_0 \in \mathcal{M}(\mathbb{R}^2)$, see \cite{G.1} for a sufficient condition for $h$.

%%%%%%%%%%  Main theorem  %%%%%%%%%%
\begin{theorem} \label{thm-main1}
{\it
Suppose that $h \in C_0^1(\mathbb{R}^2 \setminus \{ 0 \})$ is a radial function satisfying
\begin{equation*}
w_1 h, \ \nabla h \in L^1(\mathbb{R}^2), \qquad w_\alpha h, \ w_3 h, \ w_1 \nabla h \in L^\infty(\mathbb{R}^2)
\end{equation*}
for some $\alpha \in [0,1)$. Let $(\boldsymbol{u}^\eps, q^\eps)$ be a weak solution of the 2D filtered-Euler equations with $q_0 \in L^1(\mathbb{R}^2) \cap L^p(\mathbb{R}^2)$, $ 3/2 < p \leq \infty$. Then, we have
\begin{equation*}
\lim_{\varepsilon \rightarrow 0} \| \mathscr{D}_E^\varepsilon \|_{L^\infty(0,T)} = 0.
\end{equation*}
Moreover, there exists a weak solution of the 2D Euler equations,
\begin{equation*}
\boldsymbol{u} \in L^\infty (0,T;L^{p^\ast}(\mathbb{R}^2) \cap W^{1,p}_{\mathrm{loc}}(\mathbb{R}^2)), \quad \omega = \operatorname{curl} \boldsymbol{u} \in L^\infty (0,T; L^1(\mathbb{R}^2) \cap L^p(\mathbb{R}^2)),
\end{equation*}
such that, taking subsequences as needed, we have
\begin{equation*}
q^\eps \rightharpoonup \omega \quad \mathrm{in}\ L^p (\Omega_T),\qquad  \boldsymbol{u}^\eps \rightarrow \boldsymbol{u} \quad \mathrm{in}\ C([0,T];L^r_{\mathrm{loc}} (\mathbb{R}^2))
\end{equation*}
for any $r \in [1, p^\ast)$ in the $\eps \rightarrow 0$ limit, and there exists $P \in L^\infty(0,T;L^{p^\ast/2}(\mathbb{R}^2))$ such that the following local energy balance holds in the sense of distributions.
\begin{equation*}
\partial_t \left( \frac{|\boldsymbol{u}|^2}{2} \right) + \nabla \cdot \left(\boldsymbol{u} \left( \frac{|\boldsymbol{u}|^2}{2} + P \right) \right) = 0.
\end{equation*}
}
\end{theorem}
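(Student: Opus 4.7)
The plan is to establish the four assertions in turn: uniform bounds on the filtered sequence, compactness yielding the Euler limit, vanishing of $\mathscr{D}_E^\eps$, and finally the local energy balance, with the main analytic work concentrated in the dissipation estimate.

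First, I would record uniform bounds. Since the Lagrangian flow $\boldsymbol{\eta}^\eps$ is measure preserving, one has $\|q^\eps(\cdot,t)\|_{L^m}=\|q_0\|_{L^m}$ for every $m\in[1,p]$, uniformly in $\eps$ and $t$. With $\omega^\eps=h^\eps\ast q^\eps$ and $\|h^\eps\|_{L^1}=\|h\|_{L^1}$, the filtered vorticity is uniformly bounded in $L^\infty(0,T;L^1\cap L^p)$; the Biot-Savart law combined with Calder\'on-Zygmund and Hardy-Littlewood-Sobolev then controls $\nabla\boldsymbol{u}^\eps$ in $L^\infty(0,T;L^p_{\mathrm{loc}})$ and $\boldsymbol{u}^\eps$ in $L^\infty(0,T;L^{p^\ast})$ when $p<2$. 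Reading \eqref{FEE} weakly gives a uniform modulus of continuity in time for $\boldsymbol{u}^\eps$ in a negative Sobolev space, and Aubin-Lions produces a subsequence with $\boldsymbol{u}^\eps\to\boldsymbol{u}$ in $C([0,T];L^r_{\mathrm{loc}})$ for any $r\in[1,p^\ast)$ and $q^\eps\rightharpoonup\omega=\operatorname{curl}\boldsymbol{u}$ in $L^p(\Omega_T)$. The identification of $\boldsymbol{u}$ as a weak Euler solution then follows by passing to the limit in the vorticity weak formulation of \eqref{VFEE}: the linear part is immediate, while the nonlinear pairing $(\boldsymbol{u}^\eps\cdot\nabla\psi)q^\eps$ is handled by combining strong local convergence of $\boldsymbol{u}^\eps$ with weak $L^p$ convergence of $q^\eps$.

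The core of the proof is the estimate $\|\mathscr{D}_E^\eps\|_{L^\infty(0,T)}\to 0$. After the change of variables $\boldsymbol{z}=\boldsymbol{y}-\boldsymbol{x}$ and using that $H_G^\eps$ is radial, so that $\nabla H_G^\eps$ is odd, one rewrites
\begin{equation*}
\mathscr{D}_E^\eps(t)=\frac{1}{2}\intint\nabla H_G^\eps(\boldsymbol{z})\cdot\bigl(\boldsymbol{u}^\eps(\boldsymbol{x})-\boldsymbol{u}^\eps(\boldsymbol{x}+\boldsymbol{z})\bigr)q^\eps(\boldsymbol{x})q^\eps(\boldsymbol{x}+\boldsymbol{z})\,d\boldsymbol{x}\,d\boldsymbol{z}.
\end{equation*}
H\"older's inequality in $\boldsymbol{x}$ against $\delta_{\boldsymbol{z}}\boldsymbol{u}^\eps$ and the two factors of $q^\eps$ (interpolating between $L^1$ and $L^p$ when $p<2$), together with the increment estimate $\|\delta_{\boldsymbol{z}}\boldsymbol{u}^\eps\|_{L^a}\leq C|\boldsymbol{z}|^\sigma$ derived from the uniform $W^{1,p}$ bound and either the Morrey embedding $C^{1-2/p}$ (for $p>2$) or the $L^{p^\ast}$ bound plus interpolation (for $p<2$), reduces the estimate to a one-dimensional integral of $|\nabla H_G^\eps(\boldsymbol{z})|\,|\boldsymbol{z}|^\sigma$. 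The Appendix~\ref{appendix_A} scaling $\|\nabla H_G^\eps\|_{L^q}\simeq\eps^{2/q-1}$ (concentration on scale $\eps$) then yields a bound of the form $|\mathscr{D}_E^\eps(t)|\leq C\eps^{\beta(p)}$ with $\beta(p)>0$ precisely when $p>3/2$.

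For the local energy balance, I would first pair \eqref{FEE} with $\boldsymbol{u}^\eps$ and rearrange to obtain a distributional identity $\partial_t(|\boldsymbol{u}^\eps|^2/2)+\nabla\cdot(\boldsymbol{u}^\eps(|\boldsymbol{u}^\eps|^2/2+P^\eps))=R^\eps$, where $P^\eps$ collects $p^\eps$ together with quadratic remainders arising from $\boldsymbol{v}^\eps\neq\boldsymbol{u}^\eps$, and $R^\eps$ is a localized version of the dissipation. A localized reprise of the previous estimate shows $R^\eps\to 0$ in $\mathcal{D}'(\Omega_T)$, while strong $L^2_{\mathrm{loc}}$ convergence of $\boldsymbol{u}^\eps$ and weak $L^{p^\ast/2}$ compactness of $P^\eps$ let us pass to the limit. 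The main obstacle is the dissipation estimate: matching the singular scaling of $\nabla H_G^\eps$ against an admissible H\"older triple when $p<2$, where one cannot place both $q^\eps$ factors in $L^p$ and the interpolation must be arranged so that the surviving power of $\eps$ is strictly positive, is precisely the content of the critical condition $p>3/2$.
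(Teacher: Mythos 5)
Your overall architecture matches the paper's: uniform $L^1\cap L^p$ bounds on $q^\eps$, $\omega^\eps$, Calder\'on--Zygmund and Hardy--Littlewood--Sobolev for $\boldsymbol{u}^\eps$, Aubin--Lions compactness and the strong-times-weak argument for the nonlinear term to identify the Euler limit, and a mollified energy identity with a DiPerna--Lions-type commutator remainder for the local balance. The problem is in the step you yourself identify as the core. After H\"older in $\boldsymbol{x}$ you propose to discharge \emph{both} factors of $q^\eps$ into $\boldsymbol{z}$-independent norms $\|q^\eps\|_{L^b}\|q^\eps\|_{L^c}$ and reduce to $\int_{\mathbb{R}^2}|\nabla H_G^\eps(\boldsymbol{z})|\,|\boldsymbol{z}|^\sigma\,d\boldsymbol{z}$. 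That integral is infinite under the stated hypotheses on $h$: the Appendix only gives $|\nabla H_G^\eps(\boldsymbol{z})|\leq C\eps\,|\boldsymbol{z}|^{-1}(|\boldsymbol{z}|+\eps)^{-1}\sim\eps|\boldsymbol{z}|^{-2}$ at infinity (the hypothesis is $w_3h\in L^\infty$, not compact support), and $|\boldsymbol{z}|^{-2}$ is already non-integrable at infinity in $\mathbb{R}^2$ before you multiply by $|\boldsymbol{z}|^\sigma$ with $\sigma\geq 0$. So the ``one-dimensional integral'' you reduce to does not converge, and no choice of $\sigma$ fixes it. The same issue would bite the remainder $R^\eps$ in the energy balance if treated the same way.

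The missing idea is to keep the $\boldsymbol{z}$-dependence of the vorticity product: by Fubini the function $\boldsymbol{z}\mapsto\|q^\eps(\cdot,t)\,q^\eps(\cdot-\boldsymbol{z},t)\|_{L^p}$ lies in $L^p(d\boldsymbol{z})$ with norm $\|q_0\|_{L^p}^2$, and one closes with H\"older in $\boldsymbol{z}$ against $w_\sigma\nabla H_G^\eps\in L^{p'}(\mathbb{R}^2)$ (finite for $p'\in(2,3)$, with norm $\simeq\eps^{\sigma-1+2/p'}$ by scaling). This forces the exponent on the increment to be $a=p'$. With that correction your variant does close: interpolating $\|\delta_{\boldsymbol{z}}\boldsymbol{u}^\eps\|_{L^p}\leq|\boldsymbol{z}|\,\|\nabla\boldsymbol{u}^\eps\|_{L^p}$ with $\|\boldsymbol{u}^\eps\|_{L^{p^\ast}}$ gives the uniform bound $\|\delta_{\boldsymbol{z}}\boldsymbol{u}^\eps\|_{L^{p'}}\leq C|\boldsymbol{z}|^{3-4/p}$, and $\sigma=3-4/p$ makes $w_\sigma\nabla H_G^\eps\in L^{p'}$ exactly when $3/2<p<2$, yielding $|\mathscr{D}_E^\eps(t)|\leq C\eps^{6(2/3-1/p)}$ --- the same rate as the paper, which instead takes $\sigma=1$ and pays for the non-uniform bound $\|\nabla\boldsymbol{u}^\eps\|_{L^{p'}}\lesssim\eps^{2-4/p}$ via Young's inequality with $h^\eps$. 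Either bookkeeping works, but the H\"older-in-$\boldsymbol{z}$ pairing against the $L^p(d\boldsymbol{z})$ norm of the vorticity product is indispensable and is absent from your sketch.
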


 The conditions for $h$ in Theorem~\ref{thm-main1} imply $h \in L^p(\mathbb{R}^2)$ and $\nabla h \in L^q(\mathbb{R}^2)$ for any $p \in [1,\infty)$ and $q \in [1,2)$. The filter functions for the Euler-$\alpha$ model and the vortex blob model satisfy these conditions. The convergence to the Euler equations for $q_0 \in L^1(\mathbb{R}^2) \cap L^p(\mathbb{R}^2)$ is proven in the same way as \cite{G.2}, but this paper gives a simpler proof for it. As mentioned in the introduction, Cheskidov et al. \cite{Cheskidov(b)} have shown that a weak solution of the 2D Euler equations on $\mathbb{T}^2$ conserves the energy and satisfies the local energy balance provided that its vorticity belongs to $L^p(\mathbb{T}^2)$, $p \geq 3/2$. In considering the $\eps \rightarrow 0$ limit in Theorem~\ref{thm-main1}, the condition $p > 3/2$ is essential for its proof. For the case of $p = 3/2$, however, the same result as Theorem~\ref{thm-main1} holds under an additional condition for the regularity of $\boldsymbol{u}^\eps$:

\begin{theorem} \label{thm-main2}
{\it
Let $(\boldsymbol{u}^\eps, q^\eps)$ be a weak solution of the 2D filtered-Euler equations with $q_0 \in L^1(\mathbb{R}^2) \cap L^{3/2}(\mathbb{R}^2)$ and $\boldsymbol{u}^\eps$ satisfy
\begin{equation}
\| \boldsymbol{u}^\eps(\cdot - \boldsymbol{y},t) - \boldsymbol{u}^\eps(\cdot,t) \|_{L^3} \leq C(T) |\boldsymbol{y}|^\alpha, \quad (\boldsymbol{y},t) \in \Omega_T, \label{condi:thm2}
\end{equation}
for some $\alpha \in (1/3,1]$, where $C(T)$ is independent of $\eps$. Then, we have the same result as Theorem~\ref{thm-main1} with $p = 3/2$.
}
\end{theorem}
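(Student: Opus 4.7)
The plan is to isolate the single new ingredient---the energy dissipation estimate---while inheriting the compactness and passage-to-the-limit arguments from Theorem~\ref{thm-main1}. The convergences $q^\varepsilon \rightharpoonup \omega$ in $L^{3/2}(\Omega_T)$ and $\boldsymbol{u}^\varepsilon \to \boldsymbol{u}$ in $C([0,T]; L^r_{\mathrm{loc}}(\mathbb{R}^2))$ for $r \in [1,6)$, as well as the distributional local energy balance, follow from the corresponding steps in the proof of Theorem~\ref{thm-main1}: the role of $p > 3/2$ there is only to close the dissipation estimate, so once $\|\mathscr{D}_E^\varepsilon\|_{L^\infty(0,T)} \to 0$ is established, the same compactness and limit-passage arguments apply at $p = 3/2$ (they use only $p>1$ and the embedding exponent $p^\ast = 6$).

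To estimate $\mathscr{D}_E^\varepsilon(t)$, I view its defining double integral as a H\"older pairing on $\mathbb{R}^2 \times \mathbb{R}^2$ with conjugate exponents $3$ and $3/2$. The vorticity-product factor is controlled by $\|q^\varepsilon(\boldsymbol{x},t)q^\varepsilon(\boldsymbol{y},t)\|_{L^{3/2}(\mathbb{R}^4)} = \|q^\varepsilon(\cdot,t)\|_{L^{3/2}}^2 \leq \|q_0\|_{L^{3/2}}^2$, using the measure-preserving property of $\boldsymbol{\eta}^\varepsilon$. For the complementary factor, the substitution $\boldsymbol{z} = \boldsymbol{x} - \boldsymbol{y}$ together with Fubini and \eqref{condi:thm2} applied to the inner integral gives
\begin{equation*}
\|\nabla H_G^\varepsilon(\boldsymbol{x}-\boldsymbol{y}) \cdot (\boldsymbol{u}^\varepsilon(\boldsymbol{x},t) - \boldsymbol{u}^\varepsilon(\boldsymbol{y},t))\|_{L^3(\mathbb{R}^4)}^3 \leq C(T)^3 \int_{\mathbb{R}^2} |\nabla H_G^\varepsilon(\boldsymbol{z})|^3 |\boldsymbol{z}|^{3\alpha}\, d\boldsymbol{z}.
\end{equation*}
I then invoke the scaling identity $H_G^\varepsilon(\boldsymbol{x}) = H_G^1(\boldsymbol{x}/\varepsilon)$---a direct consequence of the logarithmic scaling of $G$ and the self-similar form $h^\varepsilon = \varepsilon^{-2} h(\cdot/\varepsilon)$---so that $\nabla H_G^\varepsilon(\boldsymbol{z}) = \varepsilon^{-1} \nabla H_G^1(\boldsymbol{z}/\varepsilon)$. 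Changing variables $\boldsymbol{z} = \varepsilon \boldsymbol{w}$ yields
\begin{equation*}
\int_{\mathbb{R}^2} |\nabla H_G^\varepsilon(\boldsymbol{z})|^3 |\boldsymbol{z}|^{3\alpha}\, d\boldsymbol{z} = \varepsilon^{3\alpha - 1} \int_{\mathbb{R}^2} |\nabla H_G^1(\boldsymbol{w})|^3 |\boldsymbol{w}|^{3\alpha}\, d\boldsymbol{w},
\end{equation*}
and the right-hand integral is finite by the local boundedness and $O(|\boldsymbol{w}|^{-3})$-decay of $\nabla H_G^1$ recorded in Appendix~\ref{appendix_A}. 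Combining the estimates produces $|\mathscr{D}_E^\varepsilon(t)| \leq C\|q_0\|_{L^{3/2}}^2\, \varepsilon^{\alpha - 1/3}$ uniformly in $t \in [0,T]$, which vanishes as $\varepsilon \to 0$ precisely because $\alpha > 1/3$.

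The main obstacle is identifying the correct H\"older split at the critical exponent: at $p = 3/2$ the tensor product $q^\varepsilon(\boldsymbol{x})q^\varepsilon(\boldsymbol{y})$ just saturates $L^{3/2}(\mathbb{R}^4)$, so its complementary factor is forced into $L^3(\mathbb{R}^4)$, and no bound derivable from $q^\varepsilon \in L^{3/2}$ alone furnishes an $\varepsilon$-uniform modulus of continuity of $\boldsymbol{u}^\varepsilon$ in $L^3$. Hypothesis \eqref{condi:thm2} is exactly what supplies this missing piece; the Onsager threshold $\alpha > 1/3$ then appears as the sharp balance between the velocity H\"older exponent and the single factor of $\varepsilon^{-1}$ coming from the scaling of $\nabla H_G^\varepsilon$.
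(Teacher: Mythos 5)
Your treatment of the energy dissipation rate is correct and is essentially the paper's own argument: the H\"older pairing with exponents $3$ and $3/2$, the use of \eqref{condi:thm2} after the substitution $\boldsymbol{z}=\boldsymbol{x}-\boldsymbol{y}$, and the resulting bound $|\mathscr{D}_E^\varepsilon(t)|\leq C\|q_0\|_{L^{3/2}}^2\|w_\alpha\nabla H_G^\varepsilon\|_{L^3}\leq C\varepsilon^{\alpha-1/3}\|q_0\|_{L^{3/2}}^2$ all match; your scaling identity $H_G^\varepsilon(\boldsymbol{x})=H_G^1(\boldsymbol{x}/\varepsilon)$ (valid because $\int H=0$ kills the $\log\varepsilon$ term) is just a repackaging of the paper's direct computation from \eqref{est:dH_gr-eps}. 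One small inaccuracy: Appendix~\ref{appendix_A} gives $|\nabla H_G^1(\boldsymbol{w})|=O(|\boldsymbol{w}|^{-2})$ at infinity, not $O(|\boldsymbol{w}|^{-3})$, but the integral $\int|\nabla H_G^1|^3|\boldsymbol{w}|^{3\alpha}d\boldsymbol{w}$ still converges for $\alpha\in(1/3,1]$ with the correct decay, so nothing breaks.

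The genuine gap is your claim that the local energy balance is inherited verbatim from Theorem~\ref{thm-main1} because ``the role of $p>3/2$ there is only to close the dissipation estimate.'' That is not so. In Section~\ref{proof:balance} the balance equation for $\boldsymbol{u}^\varepsilon$ carries the remainder $R^\varepsilon=\boldsymbol{u}^\varepsilon\cdot\left(h^\varepsilon\ast((\boldsymbol{u}^\varepsilon)^\perp q^\varepsilon)-(\boldsymbol{u}^\varepsilon)^\perp\omega^\varepsilon\right)$, and the proof that $R^\varepsilon\to 0$ in $L^\infty(0,T;L^1)$ rests on \eqref{est:R-eps}, whose rate is $\varepsilon^{6(2/3-1/p)}$. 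At $p=3/2$ this exponent is exactly zero, so \eqref{est:R-eps} only yields uniform boundedness of $R^\varepsilon$, and the local energy balance for the limit $\boldsymbol{u}$ does not follow. The paper closes this by re-estimating $\|R^\varepsilon(\cdot,t)\|_{L^1}\leq\|\boldsymbol{u}^\varepsilon(\cdot,t)\|_{L^\infty}\|h^\varepsilon\ast(\boldsymbol{u}^\varepsilon q^\varepsilon)-\boldsymbol{u}^\varepsilon\omega^\varepsilon\|_{L^1}$ and feeding hypothesis \eqref{condi:thm2} into the commutator bound (the proof of Lemma~\ref{lem1}), which gains a factor $\varepsilon^\alpha\|w_\alpha h\|_{L^1}$ against the loss $\|\boldsymbol{u}^\varepsilon\|_{L^\infty}\leq C\varepsilon^{-2+2/s}$; choosing $r$ close to $2$ gives the net rate $\varepsilon^{(\alpha-1/3)-\delta}$ with $\delta\in(0,\alpha-1/3)$. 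You need to supply this step: without it the theorem's assertion about the local energy balance is unproven at $p=3/2$.
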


We remark that \eqref{condi:thm2} is related to Onsager's critical condition, that is, $1/3$-H\"{o}lder continuity. Although Onsager conjectured for the 3D Euler equations, the energy conservation holds for the weak solution of the Euler equations satisfying \eqref{condi:thm2} regardless of the dimension \cite{Cheskidov(a)}. As it is mentioned in \cite{Cheskidov(b)}, weak solutions of the 2D Euler equations with $\omega_0 \in L^{3/2}$ satisfy \eqref{condi:thm2}. Our main theorems are consistent with these preceding results, though we require the family $\{ \boldsymbol{u}^\eps \}$ to satisfy \eqref{condi:thm2} uniformly: the existence of a uniform constant $C(T)$ with respect to $\eps > 0$.

%%%%%%%%%%%%%%%%%%%%%%%%%%%%%%%%%%%%%%%%%
%%%%%%%%%%  Proof of Theorems  %%%%%%%%%%
%%%%%%%%%%%%%%%%%%%%%%%%%%%%%%%%%%%%%%%%%
\section{Proof of main theorems}

It is sufficient to show Theorem~\ref{thm-main1} for $3/2 < p < 2$. In what follows, let $p \in (3/2,2)$ be a fixed constant.

%%%%%%%%%%%%%%%%%%%%%%%%%%%%%%%%%%%%%%%%%%%%%%%%%%%%%%%%%%
%%%%%%%%%%  Convergence to the Euler equations  %%%%%%%%%%
%%%%%%%%%%%%%%%%%%%%%%%%%%%%%%%%%%%%%%%%%%%%%%%%%%%%%%%%%%
\subsection{Convergence to the Euler equations} \label{proof:conv-Euler}

Consider a weak solution of \eqref{VFEE} with $q_0 \in L^1(\mathbb{R}^2) \cap L^p(\mathbb{R}^2)$. For any $q \in [1, p]$, we have $\| q^\eps(\cdot,t) \|_{L^q} = \| q_0 \|_{L^q}$ and
\begin{equation*}
\| \omega^\eps(\cdot,t) \|_{L^q} \leq \| h \|_{L^1} \| q_0 \|_{L^q},
\end{equation*}
so that $\{ q^\eps \}$ and $\{ \omega^\eps \}$ are uniformly bounded in $C([0,T];L^q(\mathbb{R}^2))$. Thus, there exists $\omega \in L^p(\Omega_T)$ such that $q^\eps$, $\omega^\eps \rightharpoonup \omega$ in $L^{p}(\Omega_T)$ by taking subsequences as needed since we easily find $(q^\eps -\omega^\eps)  \rightharpoonup 0$ in $L^{p}(\Omega_T)$. As for the filtered velocity $\boldsymbol{u}^\eps$, it follows from the Hardy-littlewood-Sobolev inequality that
\begin{equation}
\| \boldsymbol{u}^\eps (\cdot, t) \|_{L^{p^\ast}} \leq C \| \omega^\eps(\cdot,t) \|_{L^p} \leq C \| q_0 \|_{L^p}.   \label{ineq:HLS}
\end{equation}
More generally, we have
\begin{equation}
\| \boldsymbol{u}^\eps (\cdot, t) \|_{L^r} \leq C \| \omega^\eps(\cdot,t) \|_{L^s} \label{ineq:GHLS}
\end{equation}
for any $r \in (2, p^\ast]$ and $s \in (1,p]$ satisfying $1/r = 1/s - 1/2$. It follows from the Calder\'{o}n-Zygmund inequality that
\begin{equation}
\| \nabla \boldsymbol{u}^\eps (\cdot,t)\|_{L^q} \leq C \| \omega^\eps(\cdot,t) \|_{L^q} \label{ineq:CZ}
\end{equation}
for any $q \in (1,\infty)$, which yields $\| \nabla \boldsymbol{u}^\eps (\cdot,t)\|_{L^p} \leq C \| q_0 \|_{L^p}$. Since we have
\begin{align*}
\| \boldsymbol{u}^\eps (\cdot,t) \|_{L^p(B_R)} &\leq \| \boldsymbol{K} \chi_{B_1} \ast \omega^\eps (\cdot,t) \|_{L^p} + \| \boldsymbol{K} \chi_{\mathbb{R}^2 \setminus B_1} \ast \omega^\eps (\cdot,t) \|_{L^p(B_R)} \\
&\leq \| \boldsymbol{K} \chi_{B_1} \|_{L^1} \| \omega^\eps (\cdot,t) \|_{L^p} +  |B_R|^{1/p} \| \boldsymbol{K} \chi_{\mathbb{R}^2 \setminus B_1} \|_{L^{p'}} \| \omega^\eps (\cdot,t) \|_{L^p} \\
&\leq C _R \| \omega^\eps (\cdot,t) \|_{L^p} \leq C_R \| q_0\|_{L^p}
\end{align*}
for any $R > 0$, $\{ \boldsymbol{u}^\eps \}$ is uniformly bounded in $C([0,T]; W^{1,p}_{\mathrm{loc}}(\mathbb{R}^2))$. Note that $\omega^\eps$ satisfies
\begin{equation*}
\inttx \left( (\partial_t \psi) \omega^\varepsilon + \nabla \psi \cdot \left( h^\varepsilon \ast (\boldsymbol{u}^\varepsilon q^\varepsilon) \right) \right) d\boldsymbol{x} dt = 0
\end{equation*}
for any $\psi\in C_c^\infty(\Omega_T)$ and it follows from $\omega^\eps = \nabla^\perp \cdot \boldsymbol{u}^\eps$ that
\begin{equation}
\inttx \left( (\partial_t \nabla^\perp \psi) \cdot \boldsymbol{u}^\varepsilon - \nabla^\perp \psi(\boldsymbol{x},t) \cdot ( h^\varepsilon \ast ((\boldsymbol{u}^\varepsilon)^\perp q^\varepsilon) )  \right) d\boldsymbol{x} dt = 0. \label{eq:weak-velocity}
\end{equation}
Then, we obtain
\begin{align*}
\left| \inttx  \nabla^\perp \psi \cdot \partial_t \boldsymbol{u}^\varepsilon d\boldsymbol{x} dt \right| &\leq \int_0^T \| \nabla^\perp \psi(\cdot,t) \|_{L^\infty} \| \boldsymbol{u}^\varepsilon (\cdot,t) \|_{L^{p^\ast}} \| q^\varepsilon(\cdot,t) \|_{L^{(p^\ast)'}} dt \\
& \leq C \| \nabla^\perp \psi \|_{L^1(0,T; L^\infty(\mathbb{R}^2))} \| q_0 \|_{L^p} \| q_0 \|_{L^{(p^\ast)'}} \\
& \leq C(q_0) \| \nabla^\perp \psi \|_{L^1(0,T; H^2(\mathbb{R}^2))},
\end{align*}
where $C(q_0)$ is the constant depending on $\| q_0 \|_{L^1}$ and $\| q_0 \|_{L^p}$. Considering $\nabla \cdot \boldsymbol{u}^\eps = 0$, we find that $\{ \partial_t \boldsymbol{u}^\eps \}$ is uniformly bounded in $L^\infty(0,T; H^{-2}_{\mathrm{loc}}(\mathbb{R}^2))$. Note that the embedding $W^{1,p}(B_R)\hookrightarrow L^r(B_R)$ is compact for any $r \in [1, p^\ast)$. There exists $\boldsymbol{u} \in C([0,T]; L^r(B_R))$ such that, by taking subsequences as needed, $\boldsymbol{u}^\eps \rightarrow \boldsymbol{u}$ in $C([0,T]; L^r(B_R))$ for any $R >0$. In addition, the uniform estimates for $\{ \boldsymbol{u}^\eps \}$ yield $\boldsymbol{u} \in L^\infty (0,T; L^{p^\ast}(\mathbb{R}^2)\cap W^{1,p}_{\mathrm{loc}}(\mathbb{R}^2))$ and the Biot-Savart law $\boldsymbol{u} = \boldsymbol{K} \ast \omega$ holds. Recall that $(\boldsymbol{u}^\eps, q^\eps)$ satisfies
\begin{equation*}
\inttx \left( \partial_t \psi  + \boldsymbol{u}^\varepsilon \cdot \nabla \psi \right) q^\varepsilon d\boldsymbol{x} dt = 0
\end{equation*}
for any $\psi\in C_c^\infty(\Omega_T)$. The weak convergence of $q^\eps$ yields
\begin{equation*}
\inttx (\partial_t \psi) q^\varepsilon d\boldsymbol{x} dt  \ \longrightarrow\ \inttx (\partial_t \psi) \omega d\boldsymbol{x} dt.
\end{equation*}
The nonlinear term is divided into two parts as follows.
\begin{equation*}
\inttx \boldsymbol{u}^\varepsilon \cdot (\nabla \psi) q^\varepsilon d\boldsymbol{x} dt = \inttx \boldsymbol{u} \cdot (\nabla \psi) q^\varepsilon d\boldsymbol{x} dt + \inttx \left( \boldsymbol{u}^\varepsilon - \boldsymbol{u} \right) \cdot (\nabla \psi) q^\varepsilon d\boldsymbol{x} dt.
\end{equation*}
It follows from $\boldsymbol{u} \cdot \nabla \psi \in L^{p'}(\Omega_T)$ that
\begin{equation*}
\inttx \boldsymbol{u} \cdot (\nabla \psi) q^\varepsilon d\boldsymbol{x} dt \ \longrightarrow\ \inttx \boldsymbol{u} \cdot (\nabla \psi) \omega d\boldsymbol{x} dt.
\end{equation*}
Since there exist $r \in (3, p^\ast)$ and $s \in[1,\infty]$ such that $1 = 1/r + 1/p + 1/s$, we have
\begin{equation*}
\left| \inttx \left( \boldsymbol{u}^\varepsilon - \boldsymbol{u} \right) \cdot (\nabla \psi) q^\varepsilon d\boldsymbol{x} dt \right| \leq \| \boldsymbol{u}^\varepsilon - \boldsymbol{u} \|_{L^\infty(0,T; L^r(B_R))} \| \nabla \psi \|_{L^1(0,T; L^s(\mathbb{R}^2))} \| q_0 \|_{L^p}
\end{equation*}
for any $R > 0$ satisfying $\operatorname{supp} \psi \subset B_R \times (0,T)$, and the right-hand side converges to zero as $\eps \rightarrow 0$. Hence, we obtain
\begin{equation*}
\inttx \left( \partial_t \psi  + \boldsymbol{u} \cdot \nabla \psi \right) \omega d\boldsymbol{x} dt = 0,
\end{equation*}
that is, $(\boldsymbol{u}, \omega)$ is a weak solution of the 2D Euler equations.

\begin{remark}
The above proof for the convergence to the Euler equations is valid for $p$ satisfying $(p^\ast)' \leq p$, that is, $ p \geq 4/3$.
\end{remark}

%%%%%%%%%%%%%%%%%%%%%%%%%%%%%%%%%%%%%%%%%%%
%%%%%%%%%%  Energy conservation  %%%%%%%%%%
%%%%%%%%%%%%%%%%%%%%%%%%%%%%%%%%%%%%%%%%%%%
\subsection{Convergence of the energy dissipation rate}
From the definition of $\mathscr{D}_E^\varepsilon$, we find
\begin{align*}
| \mathscr{D}_E^\varepsilon(t) | &\leq \frac{1}{2} \intint |(\nabla H_G^\varepsilon) (\boldsymbol{y})|  | \boldsymbol{u}^\varepsilon(\boldsymbol{x}, t) - \boldsymbol{u}^\varepsilon(\boldsymbol{x} - \boldsymbol{y}, t) | | q^\varepsilon(\boldsymbol{x},t) | |q^\varepsilon(\boldsymbol{x} - \boldsymbol{y},t)| d \boldsymbol{x} d \boldsymbol{y} \\
& \leq \frac{1}{2} \int |(\nabla H_G^\varepsilon) (\boldsymbol{y})| \| \boldsymbol{u}^\varepsilon(\cdot, t) - \boldsymbol{u}^\varepsilon(\cdot - \boldsymbol{y}, t) \|_{L^{p'}} \| q^\varepsilon(\cdot,t) q^\varepsilon(\cdot - \boldsymbol{y},t) \|_{L^p} d \boldsymbol{y}.
\end{align*}
Note that
\begin{equation*}
\| \boldsymbol{u}^\eps(\cdot,t) - \boldsymbol{u}^\eps(\cdot - \boldsymbol{y},t) \|_{L^{p'}} \leq C |\boldsymbol{y}| \|\nabla \boldsymbol{u}^\eps (\cdot,t)\|_{L^{p'}},
\end{equation*}
and it follows from \eqref{ineq:CZ} that
\begin{equation*}
\| \nabla \boldsymbol{u}^\eps (\cdot,t) \|_{L^{p'}}  \leq C \| \omega^\eps (\cdot,t) \|_{L^{p'}} \leq  C \eps^{-2 + 2/s }\| h \|_{L^s} \| q_0 \|_{L^p},
\end{equation*}
where $s \in (1, 3/2)$ satisfies $1 + 1/{p'} = 1/s + 1/p$, that is, $1/s = 2 - 2/p$. Thus, we find
\begin{align*}
| \mathscr{D}_E^\varepsilon(t) | & \leq  C \eps^{2 - 4/p }\| h \|_{L^s} \| q_0 \|_{L^p} \int |\boldsymbol{y}| |\nabla H_G^\varepsilon (\boldsymbol{y})|  \| q^\varepsilon(\cdot,t) q^\varepsilon(\cdot - \boldsymbol{y},t) \|_{L^p} d \boldsymbol{y} \\
&\leq C \eps^{2 - 4/p }\| h \|_{L^s} \| q_0 \|_{L^p}^3 \| w_1 \nabla H_G^\varepsilon \|_{L^{p'}}.
\end{align*}
According to \eqref{est:dH_gr-eps}, we have
\begin{equation*}
|(\nabla H_G^\varepsilon) (\boldsymbol{x})| \leq \frac{C}{|\boldsymbol{x}|} \frac{\eps}{|\boldsymbol{x}| + \eps}, \label{est:HG}
\end{equation*}
where $C$ is the constant independent of $\eps$, and it follows that
\begin{align*}
\| w_1 \nabla H_G^\varepsilon \|_{L^{p'}} &\leq C \left( \int_{\mathbb{R}^2}  \frac{1}{(|\boldsymbol{x}|/\eps + 1)^{p'}} d\boldsymbol{x} \right)^{1/{p'}}  = C \eps^{2/{p'}} \left(  \int_0^\infty \frac{r}{(r + 1)^{p'}} dr \right)^{1/{p'}}.
\end{align*}
Owing to $2< p' <3$, the right-hand side is finite. Hence, we obtain
\begin{equation*}
| \mathscr{D}_E^\varepsilon(t) | \leq C \eps^{6(2/3 - 1/p) }  \| q_0 \|_{L^p}^3,
\end{equation*}
and $\| \mathscr{D}_E^\varepsilon \|_{L^\infty(0,T)}$ converges to zero in the $\eps \rightarrow 0$ limit.

%%%%%%%%%%  Remark  %%%%%%%%%%
\begin{remark} \label{rem:conv-Euler}
It is noteworthy that
\begin{equation*}
| \mathscr{D}_E^\varepsilon(t)| \leq C \| \boldsymbol{u}^\varepsilon(\cdot, t)\|_{L^{p^\ast}} \| q^\varepsilon(\cdot,t)\|_{L^p} \left\| \frac{1}{|\cdot|} \ast | q^\varepsilon(\cdot,t) | \right\|_{L^{p^\ast}}
\end{equation*}
holds for $p$ satisfying $1 = 2 /p^{\ast} + 1/p$, that is, $p =3/2$. Considering the Hardy-Littlewood-Sobolev inequality. we find
\begin{equation*}
| \mathscr{D}_E^\varepsilon(t)| \leq C \| \boldsymbol{u}^\varepsilon(\cdot, t)\|_{L^6} \| q^\varepsilon(\cdot,t)\|_{L^{3/2}}^2 \leq C \| q_0 \|_{L^{3/2}}^3,
\end{equation*}
so that $\|\mathscr{D}_E^\varepsilon \|_{L^\infty(0,T)}$ is uniformly bounded with respect to $\eps$.
\end{remark}

%%%%%%%%%%%%%%%%%%%%%%%%%%%%%%%%%%%%%%%%%%%%
%%%%%%%%%%  Local energy balance  %%%%%%%%%%
%%%%%%%%%%%%%%%%%%%%%%%%%%%%%%%%%%%%%%%%%%%%
\subsection{Local energy balance} \label{proof:balance}
We show that the limit function $\boldsymbol{u}$ satisfies the local energy balance. If follows from \eqref{eq:weak-velocity} and $\nabla \cdot \boldsymbol{u}^\eps = 0$ that
\begin{align*}
&\inttx \left( \partial_t \nabla^\perp \psi \cdot \boldsymbol{u}^\varepsilon +  \nabla \otimes \nabla^\perp \psi :  \boldsymbol{u}^\varepsilon \otimes \boldsymbol{u}^\varepsilon \right)  d\boldsymbol{x} dt \\
&= \inttx \nabla \psi \cdot \left( h^\eps \ast (\boldsymbol{u}^\varepsilon q^\varepsilon) - \boldsymbol{u}^\varepsilon \omega^\varepsilon  \right)  d\boldsymbol{x} dt \equiv I
\end{align*}
for any $\psi\in C_c^\infty(\Omega_T)$. Since $I$ is rewritten by
\begin{equation*}
I = \inttx \nabla^\perp \psi \cdot \left( h^\eps \ast ((\boldsymbol{u}^\varepsilon)^\perp q^\varepsilon) - (\boldsymbol{u}^\varepsilon)^\perp \omega^\varepsilon  \right)  d\boldsymbol{x} dt,
\end{equation*}
there exists a distribution $P^\eps$ such that $\boldsymbol{u}^\eps$ satisfies
\begin{equation}
\partial_t \boldsymbol{u}^\varepsilon + (\boldsymbol{u}^\varepsilon \cdot \nabla ) \boldsymbol{u}^\varepsilon = - \nabla P^\eps  - \left( h^\eps \ast ((\boldsymbol{u}^\varepsilon)^\perp q^\varepsilon) - (\boldsymbol{u}^\varepsilon)^\perp \omega^\varepsilon  \right), \label{eq:u-eps}
\end{equation}
in the sense of distributions.

We first see properties of $P^\eps$. For any $r \in (2, p^\ast)$, it follows from \eqref{ineq:GHLS} and \eqref{ineq:CZ} that
\begin{equation}
\| \boldsymbol{u}^\eps (\cdot, t) \|_{L^r} \leq C \| \omega^\eps (\cdot, t) \|_{L^{s_1}} \leq C \| q_0 \|_{L^{s_1}}  \label{est:local-1}
\end{equation}
for $s_1 \in (1, p)$ satisfying $1/r = 1/{s_1} - 1/2$, and
\begin{align}
\| \nabla \boldsymbol{u}^\eps(\cdot, t) \|_{L^r} &\leq C \| \omega^\eps(\cdot, t) \|_{L^r} \leq C \eps^{-2 + 2/{s_2}} \| h \|_{L^{s_2}} \| q_0 \|_{L^p}, \label{est:local-2} \\
\| \nabla^2 \boldsymbol{u}^\eps(\cdot, t) \|_{L^r} &\leq C \| \nabla \omega^\eps(\cdot, t) \|_{L^r} \leq C \eps^{-3 + 2/{s_2}} \| \nabla h \|_{L^{s_2}} \| q_0 \|_{L^p}  \label{est:local-3}
\end{align}
for $s_2 \in (1,2)$ satisfying $1 + 1/r = 1/{s_2} + 1/p$, respectively. Thus, for any fixed $\eps$, we have $\boldsymbol{u}^\eps \in C([0,T]; W^{2,r} (\mathbb{R}^2))$, so that the Sobolev embedding gives $\boldsymbol{u}^\eps \in C([0,T]; C^1 (\mathbb{R}^2))$ and $\omega^\eps \in C(\overline{\Omega_T})$. Taking the divergence of \eqref{eq:u-eps}, we have
\begin{equation*}
\Delta P^\eps =  \nabla^\perp \cdot \left[ h^\eps \ast (\boldsymbol{u}^\varepsilon q^\varepsilon) - \boldsymbol{u}^\varepsilon \omega^\varepsilon - (\boldsymbol{u}^\eps \cdot \nabla) (\boldsymbol{u}^\varepsilon)^\perp \right] \equiv \nabla^\perp \cdot  \boldsymbol{F}^\eps,
\end{equation*}
which gives $P^\eps(\boldsymbol{x},t) = \int \boldsymbol{K}(\boldsymbol{x} - \boldsymbol{y}) \cdot \boldsymbol{F}^\eps(\boldsymbol{y},t) d\boldsymbol{y}$. Similarly to \eqref{est:local-1}, \eqref{est:local-2} and \eqref{est:local-3}, we have
\begin{align*}
\| P^\eps (\cdot, t) \|_{L^r} &\leq C \| \boldsymbol{F}^\eps (\cdot, t) \|_{L^{s_1}}, \\
\| \nabla P^\eps (\cdot, t) \|_{L^r} &\leq C \| \boldsymbol{F}^\eps (\cdot, t) \|_{L^r}, \\
\| \nabla^2 P^\eps (\cdot, t) \|_{L^r} &\leq C \| \nabla \boldsymbol{F}^\eps (\cdot, t) \|_{L^r},
\end{align*}
and it follows that
\begin{align*}
\| \boldsymbol{F}^\eps (\cdot, t) \|_{L^1} &\leq \| \boldsymbol{u}^\eps(\cdot, t) \|_{L^{p'}} ( \| q^\eps (\cdot, t)\|_{L^p} + \| \omega^\eps(\cdot, t) \|_{L^p} + \| \nabla \boldsymbol{u}^\eps(\cdot, t) \|_{L^p} ), \\
\| \boldsymbol{F}^\eps (\cdot, t)\|_{L^r} &\leq \| \boldsymbol{u}^\eps(\cdot, t) \|_{L^\infty} ( \| h^\eps \|_{L^{s_2}} \| q^\eps (\cdot, t)\|_{L^p} + \| \omega^\eps(\cdot, t) \|_{L^r} + \| \nabla \boldsymbol{u}^\eps (\cdot, t)\|_{L^r} ) , \\
\|\nabla \boldsymbol{F}^\eps(\cdot, t) \|_{L^r} &\leq \|\nabla \boldsymbol{u}^\eps (\cdot, t) \|_{L^{2r}}^2
+ \|\boldsymbol{u}^\eps (\cdot, t)\|_{L^\infty} \left( \| \nabla^2 \boldsymbol{u}^\eps(\cdot, t) \|_{L^r} + \|\nabla h^\eps \|_{L^{s_2}} \| q^\eps(\cdot, t) \|_{L^p} \right).
\end{align*}
Note that \eqref{est:u-eps-infty}, \eqref{est:local-1} and \eqref{est:local-2} imply that $\| \boldsymbol{u}^\eps (\cdot, t)\|_{L^\infty} \leq C_\eps$, $\| \boldsymbol{u}^\eps (\cdot, t)\|_{L^{p'}} \leq C_\eps$ and
\begin{equation*}
\| \nabla \boldsymbol{u}^\eps(\cdot, t) \|_{L^{2r}} \leq C \| \omega^\eps(\cdot, t) \|_{L^{2r}} \leq C \eps^{-2 + 2/{s_3}} \| h \|_{L^{s_3}} \| q_0 \|_{L^p}
\end{equation*}
for $s_3 \in (1,\infty)$ satisfying $1 + 1/{2r} = 1/{s_3} + 1/p$, respectively. Thus, we find
\begin{equation*}
\| \boldsymbol{F}^\eps (\cdot, t) \|_{L^{s_1}} \leq C_\eps, \quad \| \boldsymbol{F}^\eps (\cdot, t) \|_{L^r} \leq C_\eps, \quad \|\nabla \boldsymbol{F}^\eps(\cdot, t) \|_{L^r} \leq C_\eps,
\end{equation*}
which yields $P^\eps \in C([0,T]; W^{2,r}(\mathbb{R}^2))$ for any $r \in (2, p^\ast)$, so that $P^\eps \in C([0,T]; C^1(\mathbb{R}^2))$ holds.

%\begin{equation*}
%|\nabla \boldsymbol{F}^\eps | \leq C \left( |\nabla \boldsymbol{u}^\eps|^2 + |\boldsymbol{u}^\eps| | \nabla^2 \boldsymbol{u}^\eps | + |\nabla h^\eps \ast (\boldsymbol{u}^\eps q^\eps) | + | \nabla \boldsymbol{u}^\eps| |\omega^\eps | + |\boldsymbol{u}^\eps| |\nabla \omega^\eps | \right),
%\end{equation*}

Next, we see the convergence of $P^\eps$. Recall that $\boldsymbol{u}^\eps$ converges to $\boldsymbol{u} \in L^\infty(0,T; L^{p^\ast}(\mathbb{R}^2))$ strongly in $C([0,T]; L^r_{\mathrm{loc}}(\mathbb{R}^2))$ for any $r \in [1, p^\ast)$. Fix a constant $r \in [2, p^\ast)$ and define
\begin{equation*}
P(\boldsymbol{x},t) \equiv - \nabla \cdot \int_{\mathbb{R}^2} \nabla G(\boldsymbol{x} - \boldsymbol{y}) \cdot (\boldsymbol{u} \otimes \boldsymbol{u})(\boldsymbol{y},t) d\boldsymbol{y}.
\end{equation*}
Then, $P \in L^\infty(0,T; L^{p^\ast/2}(\mathbb{R}^2))$ holds since the Calder\'{o}n-Zygmund inequality and \eqref{ineq:HLS} yield
\begin{equation*}
\| P(\cdot,t) \|_{L^{p^\ast/2}} \leq C \| (\boldsymbol{u} \otimes \boldsymbol{u})(\cdot,t) \|_{L^{p^\ast/2}} \leq C \| \boldsymbol{u} (\cdot,t) \|_{L^{p^\ast}}^2 \leq C \| q_0 \|_{L^p}^2.
\end{equation*}
Note that
\begin{align*}
P^\eps(\boldsymbol{x},t)  - P(\boldsymbol{x},t) &= - \nabla \cdot \int_{\mathbb{R}^2} \nabla G(\boldsymbol{x} - \boldsymbol{y}) \cdot \left[ (\boldsymbol{u}^\eps \otimes \boldsymbol{u}^\eps)(\boldsymbol{y},t) - (\boldsymbol{u} \otimes \boldsymbol{u})(\boldsymbol{y},t) \right] d\boldsymbol{y} \\
& \quad + \int_{\mathbb{R}^2} \boldsymbol{K}(\boldsymbol{x} - \boldsymbol{y}) \cdot \left[  (h^\eps \ast (\boldsymbol{u}^\varepsilon q^\varepsilon))(\boldsymbol{y},t)  - (\boldsymbol{u}^\varepsilon \omega^\varepsilon)(\boldsymbol{y},t) \right] d\boldsymbol{y} \\
& \equiv I_1^\eps(\boldsymbol{x},t) + I_2^\eps(\boldsymbol{x},t).
\end{align*}
For any fixed $R > 0$, it follows that
\begin{align*}
\| I_1^\eps(\cdot,t) \|_{L^{r/2}(B_R)} &\leq \left\| \nabla \hspace{-0.2mm} \cdot \hspace{-0.3mm} \int \nabla G(\cdot - \boldsymbol{y}) \cdot \left[ (\boldsymbol{u}^\eps \otimes \boldsymbol{u}^\eps) - (\boldsymbol{u} \otimes \boldsymbol{u}) \right](\boldsymbol{y},t) \chi_{B_{R'}}(\boldsymbol{y}) d\boldsymbol{y} \right\|_{L^{r/2}} \\
& + \left\| \int (\nabla^2 G)(\cdot - \boldsymbol{y}) : \left[ (\boldsymbol{u}^\eps \otimes \boldsymbol{u}^\eps) - (\boldsymbol{u} \otimes \boldsymbol{u}) \right](\boldsymbol{y},t) \chi_{\mathbb{R}^2 \setminus B_{R'}}(\boldsymbol{y}) d\boldsymbol{y} \right\|_{L^{r/2}(B_R)}
\end{align*}
for any $R' > 2 R$. Thus, we have
\begin{align*}
\| I_1^\eps(\cdot,t) \|_{L^{r/2}(B_R)} &\leq C \| ((\boldsymbol{u}^\eps \otimes \boldsymbol{u}^\eps) - (\boldsymbol{u} \otimes \boldsymbol{u}) )(\cdot,t) \|_{L^{r/2}(B_{R'})} \\
&\quad + |B_R|^{2/r} \| (\nabla^2 G) \chi_{\mathbb{R}^2 \setminus B_{R'/2}} \|_{L^{(p^\ast/2)'}}  \| ( (\boldsymbol{u}^\eps \otimes \boldsymbol{u}^\eps) - (\boldsymbol{u} \otimes \boldsymbol{u}) )(\cdot,t) \|_{L^{p^\ast/2}} \\
&\leq C (\| \boldsymbol{u}^\eps (\cdot,t) \|_{L^r} + \| \boldsymbol{u} (\cdot,t) \|_{L^r}) \| (\boldsymbol{u}^\eps - \boldsymbol{u})(\cdot,t) \|_{L^r(B_{R'})} \\
&\quad + C |B_R|^{2/r} (R')^{2(1 - 2/p)} (\| \boldsymbol{u}^\eps (\cdot,t) \|_{L^{p^\ast}}^2 + \| \boldsymbol{u} (\cdot,t) \|_{L^{p^\ast}}^2)  \\
& \leq C(q_0, R) \left( \| (\boldsymbol{u}^\eps - \boldsymbol{u})(\cdot,t) \|_{L^r(B_{R'})} + (R')^{2(1 - 2/p)} \right),
\end{align*}
where $C(q_0,R)$ is the constant depending on $\| q_0\|_{L^1}$, $\| q_0\|_{L^p}$ and $R$. To estimate $I_2^\eps$, we introduce the following lemma, whose proof is similar to Lemma II.1 in \cite{Diperna(a)}.

%%%%%%%%%%  Lemma  %%%%%%%%%%
\begin{lemma} \label{lem1}
Let $\varphi$ satisfy $w_1 \varphi \in L^1(\mathbb{R}^2)$. For any $\boldsymbol{u} \in W^{1,\beta}(\mathbb{R}^2)$ and $q \in L^\gamma(\mathbb{R}^2)$, we have
\begin{equation*}
\| \varphi \ast (\boldsymbol{u} q) - \boldsymbol{u} \cdot (\varphi \ast q) \|_{L^\alpha} \leq \| w_1 \varphi \|_{L^1} \| \nabla \boldsymbol{u} \|_{L^\beta} \| q \|_{L^\gamma},
\end{equation*}
where $1 / \alpha = 1/ \beta + 1/ \gamma$.
\end{lemma}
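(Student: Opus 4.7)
The plan is to follow the standard DiPerna--Lions commutator template. First I would write the pointwise identity
\begin{equation*}
\bigl(\varphi \ast (\boldsymbol{u} q)\bigr)(\boldsymbol{x}) - \boldsymbol{u}(\boldsymbol{x}) \cdot (\varphi \ast q)(\boldsymbol{x}) = \int_{\mathbb{R}^2} \varphi(\boldsymbol{z}) \bigl(\boldsymbol{u}(\boldsymbol{x} - \boldsymbol{z}) - \boldsymbol{u}(\boldsymbol{x})\bigr) q(\boldsymbol{x} - \boldsymbol{z})\, d\boldsymbol{z},
\end{equation*}
obtained by substituting $\boldsymbol{z} = \boldsymbol{x} - \boldsymbol{y}$ into the defining integrals. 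For smooth $\boldsymbol{u}$ one has the pointwise fundamental-theorem-of-calculus bound $|\boldsymbol{u}(\boldsymbol{x} - \boldsymbol{z}) - \boldsymbol{u}(\boldsymbol{x})| \leq |\boldsymbol{z}| \int_0^1 |\nabla \boldsymbol{u}(\boldsymbol{x} - s\boldsymbol{z})|\, ds$, so the integrand is controlled by $|\varphi(\boldsymbol{z})||\boldsymbol{z}| \int_0^1 |\nabla \boldsymbol{u}(\boldsymbol{x} - s\boldsymbol{z})|\, ds\,|q(\boldsymbol{x} - \boldsymbol{z})|$.

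Next I would take the $L^\alpha$-norm in $\boldsymbol{x}$ and apply Minkowski's integral inequality to pull the integrals over $\boldsymbol{z}$ and $s$ outside. Inside the norm, Hölder's inequality with the exponent relation $1/\alpha = 1/\beta + 1/\gamma$ gives
\begin{equation*}
\bigl\| |\nabla \boldsymbol{u}(\cdot - s\boldsymbol{z})|\,|q(\cdot - \boldsymbol{z})| \bigr\|_{L^\alpha} \leq \| \nabla \boldsymbol{u}(\cdot - s\boldsymbol{z})\|_{L^\beta} \|q(\cdot - \boldsymbol{z})\|_{L^\gamma} = \|\nabla \boldsymbol{u}\|_{L^\beta} \|q\|_{L^\gamma},
\end{equation*}
where the last equality is translation invariance of the Lebesgue norm. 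The remaining $\boldsymbol{z}$-integral is exactly $\int |\varphi(\boldsymbol{z})||\boldsymbol{z}|\,d\boldsymbol{z} = \|w_1 \varphi\|_{L^1}$, yielding the claimed inequality for smooth $\boldsymbol{u}$.

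To upgrade to the stated hypothesis $\boldsymbol{u} \in W^{1,\beta}(\mathbb{R}^2)$, I would approximate $\boldsymbol{u}$ by $\boldsymbol{u}_n \in C^\infty \cap W^{1,\beta}$ with $\boldsymbol{u}_n \to \boldsymbol{u}$ in $W^{1,\beta}$ (standard mollification) and pass to the limit in both sides; the right-hand side is continuous under this convergence by construction, and the left-hand side is controlled by elementary Young/Hölder estimates that ensure convergence in $L^\alpha$. The main technical point is to justify the Minkowski step and the pointwise identity for $W^{1,\beta}$ functions, which is why reducing to the smooth case via density is cleaner than working directly with weak derivatives. Everything else is mechanical; there is no significant obstacle beyond keeping track of the triple product structure and making sure the translation invariance step is applied inside the outer integrals, not after.
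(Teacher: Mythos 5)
Your proposal is correct and follows essentially the same DiPerna--Lions commutator argument as the paper: the same pointwise identity, Minkowski's integral inequality, H\"older with $1/\alpha = 1/\beta + 1/\gamma$, and the translation estimate worth $|\boldsymbol{z}|\,\|\nabla\boldsymbol{u}\|_{L^\beta}$. The only cosmetic difference is that the paper applies H\"older first and then invokes $\|\boldsymbol{u}(\cdot-\boldsymbol{y})-\boldsymbol{u}(\cdot)\|_{L^\beta}\leq |\boldsymbol{y}|\,\|\nabla\boldsymbol{u}\|_{L^\beta}$ directly for $W^{1,\beta}$ functions, which sidesteps the pointwise FTC bound and the explicit density step you add at the end.
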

\begin{proof}
It follows that
\begin{align*}
&\varphi \ast (\boldsymbol{u} q) - \boldsymbol{u} \cdot (\varphi \ast q) \\
&= \int_{\mathbb{R}^2} \varphi (\boldsymbol{x} - \boldsymbol{y}) \cdot \boldsymbol{u}(\boldsymbol{y}) q(\boldsymbol{y}) d \boldsymbol{y} -  \boldsymbol{u}(\boldsymbol{x}) \cdot \int_{\mathbb{R}^2}  \varphi (\boldsymbol{x} - \boldsymbol{y}) q(\boldsymbol{y}) d \boldsymbol{y} \\
& = \int_{\mathbb{R}^2} \varphi (\boldsymbol{x} - \boldsymbol{y}) \cdot  \left( \boldsymbol{u}(\boldsymbol{y})  -  \boldsymbol{u}(\boldsymbol{x}) \right) q(\boldsymbol{y}) d \boldsymbol{y}\\
& = \int_{\mathbb{R}^2} \varphi (\boldsymbol{y}) \cdot  \left( \boldsymbol{u}(\boldsymbol{x} - \boldsymbol{y})  -  \boldsymbol{u}(\boldsymbol{x}) \right) q(\boldsymbol{x} - \boldsymbol{y}) d \boldsymbol{y}.
\end{align*}
Thus, we find
\begin{align*}
& \|  \varphi \ast (\boldsymbol{u} q) - \boldsymbol{u} \cdot (\varphi \ast q) \|_{L^\alpha} \\
& \leq  \int_{\mathbb{R}^2} | \varphi (\boldsymbol{y})| \left( \int_{\mathbb{R}^2} | \boldsymbol{u}(\boldsymbol{x} - \boldsymbol{y}) - \boldsymbol{u}(\boldsymbol{x}) |^\alpha |q(\boldsymbol{x} - \boldsymbol{y})|^\alpha d \boldsymbol{x} \right)^{1/\alpha} d \boldsymbol{y} \\
& \leq  \| q \|_{L^\gamma} \int_{\mathbb{R}^2} |\varphi (\boldsymbol{y})| \left( \int_{\mathbb{R}^2} | \boldsymbol{u}(\boldsymbol{x} - \boldsymbol{y}) - \boldsymbol{u}(\boldsymbol{x}) |^\beta d \boldsymbol{x} \right)^{1/\beta} d \boldsymbol{y}.
\end{align*}
Since we have $\| \boldsymbol{u}^\varepsilon(\cdot - \boldsymbol{y}) -  \boldsymbol{u}^\varepsilon(\cdot) \|_{L^\beta} \leq |\boldsymbol{y}| \| \nabla \boldsymbol{u}^\varepsilon \|_{L^\beta}$, we obtain the desired estimate.
\end{proof}

It follows from Lemma~\ref{lem1} that
\begin{equation*}
\|  (h^\eps \ast (\boldsymbol{u}^\varepsilon q^\varepsilon))(\cdot,t)  - (\boldsymbol{u}^\varepsilon \omega^\varepsilon)(\cdot,t) \|_{L^\alpha} \leq \eps \| w_1 h \|_{L^1} \| \nabla \boldsymbol{u}^\eps(\cdot,t) \|_{L^\beta} \| q_0 \|_{L^p},
\end{equation*}
where $1/\alpha = 1/\beta + 1/p$. Considering \eqref{ineq:CZ}, we find $ \| \nabla \boldsymbol{u}^\eps (\cdot,t)\|_{L^\beta} \leq C \eps^{-2 + 2/s} \| h\|_{L^s} \| q_0 \|_{L^p}$ for $s \in (1, \infty)$ satisfying $1 + 1/\beta = 1/s + 1/p$, that is, $1/s = 1 + 1/\alpha - 2/p$. Thus, we obtain
\begin{equation}
\|  (h^\eps \ast (\boldsymbol{u}^\varepsilon q^\varepsilon))(\cdot,t)  - (\boldsymbol{u}^\varepsilon \omega^\varepsilon)(\cdot,t) \|_{L^\alpha} \leq C \eps^{1 + 2/\alpha - 4/p} \| q_0 \|_{L^p}^2, \label{est:remainder}
\end{equation}
that is, $1/\alpha > 2/p - 1/2$ is required to show that the right-hand side converges to zero in the $\eps \rightarrow 0$ limit. We have
\begin{align*}
\| I_2^\eps(\cdot,t) \|_{L^{r/2}(B_R)} &\leq \| \boldsymbol{K} \chi_{B_1} \|_{L^s} \| (h^\eps \ast (\boldsymbol{u}^\varepsilon q^\varepsilon))(\cdot,t)  - (\boldsymbol{u}^\varepsilon \omega^\varepsilon)(\cdot,t)  \|_{L^{r_1}} \\
& \quad + C_R \| \boldsymbol{K} \chi_{\mathbb{R}^2 \setminus B_1} \|_{L^{r'_2}} \| (h^\eps \ast (\boldsymbol{u}^\varepsilon q^\varepsilon))(\cdot,t)  - (\boldsymbol{u}^\varepsilon \omega^\varepsilon)(\cdot,t)  \|_{L^{r_2}}
\end{align*}
for $r_1$, $s \in [1,\infty)$ satisfying $1 + 2/r = 1/s + 1/{r_1}$ and $r_2 \in [1,2)$. Owing to $2/p - 1/2 < 5/6$, we have $1/{r_2} > 2/p - 1/2$ for any $r_2 \in [1,6/5)$ and thus the second term converges to zero as $\eps \rightarrow 0$. As for the first term, $1 \leq s < 2$ and $1/{r_1} > 2/p - 1/2$ are required to show the convergence. Set $1/s = 1/2 + \delta_1$ for $\delta_1 \in (0,1/2]$. Since there exists $\delta_2 > 0$ such that $1/r = 1/{p^\ast} + \delta_2$, we have
\begin{equation*}
\frac{1}{r_1} = 1 + \frac{2}{r} - \frac{1}{s} = \frac{2}{p} - \frac{1}{2} + 2 \delta_2 - \delta_1.
\end{equation*}
Taking sufficiently small $\delta_1$ satisfying $2 \delta_2 > \delta_1$, we find $1/{r_1} > 2/p - 1/2$. Hence, we conclude $\| I_2^\eps \|_{L^\infty(0,T;L^{r/2}(B_R))} \rightarrow 0$ as $\eps \rightarrow 0$. Summarizing the above estimates, we obtain
\begin{align*}
\| P^\eps - P \|_{L^\infty(0,T; L^{r/2}(B_R))} &\leq C(q_0, R) \left( \| \boldsymbol{u}^\eps - \boldsymbol{u} \|_{L^\infty(0,T; L^r(B_{R'}))} + (R')^{2(1 - 2/p)} + \eps^\alpha \right)
\end{align*}
for some $\alpha > 0$ and sufficiently large $R' >0$. Taking the $\eps \rightarrow 0$ limit and the $R' \rightarrow \infty$ limit in order, we find $P^\eps \rightarrow P$ in $L^\infty(0,T; L^{r/2}_{\mathrm{loc}}(\mathbb{R}^2))$ for any $r \in [2, p^\ast)$.

Finally, we show the local energy balance. Considering the regularities of $\boldsymbol{u}^\eps$ and $P^\eps$, we find from \eqref{eq:u-eps} that $\partial_t \boldsymbol{u}^\eps$ is continuous in $\overline{\Omega_T}$. Multiplying the equation \eqref{eq:u-eps} by $\boldsymbol{u}^\eps$, we have
\begin{equation*}
\partial_t \left( \frac{|\boldsymbol{u}^\varepsilon|^2}{2} \right) + \nabla \cdot \left[ \boldsymbol{u}^\varepsilon \left( \frac{|\boldsymbol{u}^\varepsilon|^2}{2} + P^\eps \right) \right] =  - \boldsymbol{u}^\varepsilon \cdot \left( h^\eps \ast ((\boldsymbol{u}^\varepsilon)^\perp q^\varepsilon) - (\boldsymbol{u}^\varepsilon)^\perp \omega^\varepsilon  \right).
\end{equation*}
Recall $\boldsymbol{u}^\eps \rightarrow \boldsymbol{u}$ in $L^\infty(0,T;L^{r_1}_{\mathrm{loc}}(\mathbb{R}^2))$ and $P^\eps \rightarrow P$ in $L^\infty(0,T;L^{r_2}_{\mathrm{loc}}(\mathbb{R}^2))$ for any $r_1 \in [1, p^\ast)$ and $r_2 \in [1, p^\ast/2)$. In a similar way shown in \cite{Cheskidov(b)}, we obtain
\begin{align*}
&\partial_t \left( \frac{|\boldsymbol{u}^\varepsilon|^2}{2} \right) \ \longrightarrow\ \partial_t \left( \frac{|\boldsymbol{u}|^2}{2} \right), \\
&\nabla \cdot \left[ \boldsymbol{u}^\varepsilon \left( \frac{|\boldsymbol{u}^\varepsilon|^2}{2} + P^\eps \right) \right]  \ \longrightarrow\  \nabla \cdot \left[ \boldsymbol{u} \left( \frac{|\boldsymbol{u}|^2}{2} + P \right) \right],
\end{align*}
in the sense of distributions. Indeed, setting $p_1 \equiv (p^\ast)'$ and $p_2 \equiv (p^\ast/2)'$ for simplicity, we have
\begin{align*}
\left\| \left( \frac{|\boldsymbol{u}^\varepsilon|^2}{2} - \frac{|\boldsymbol{u}|^2}{2} \right)(\cdot,t) \right\|_{L^{p_1}(B_R)} &\leq \frac{1}{2} \| (\boldsymbol{u}^\varepsilon - \boldsymbol{u})(\cdot,t) \|_{L^{p_2}(B_R)} \left( \| \boldsymbol{u}^\varepsilon (\cdot,t) \|_{L^{p^\ast}} + \| \boldsymbol{u} (\cdot,t) \|_{L^{p^\ast}} \right) \\
&\leq C \| \boldsymbol{u}^\varepsilon - \boldsymbol{u} \|_{L^\infty(0,T;L^{p_2}(B_R))} \| q_0 \|_{L^p}
\end{align*}
and
\begin{align*}
&\left\| \left[ \boldsymbol{u}^\varepsilon \left( \frac{|\boldsymbol{u}^\varepsilon|^2}{2} + P^\eps \right)  -  \boldsymbol{u} \left( \frac{|\boldsymbol{u}|^2}{2} + P \right) \right](\cdot,t) \right\|_{L^1(B_R)} \\
&\leq \frac{1}{2} \| (\boldsymbol{u}^\varepsilon - \boldsymbol{u})(\cdot,t) \|_{L^{p_2}(B_R)}  \| \boldsymbol{u}^\varepsilon (\cdot,t)\|_{L^{p^\ast}}^2  + \| \boldsymbol{u}(\cdot,t) \|_{L^{p^\ast}} \left\| \left( \frac{|\boldsymbol{u}^\varepsilon|^2}{2} - \frac{|\boldsymbol{u}|^2}{2} \right)(\cdot,t) \right\|_{L^{p_1}(B_R)} \\
& \quad + \| (\boldsymbol{u}^\varepsilon - \boldsymbol{u})(\cdot,t) \|_{L^{p_2}(B_R)} \| P^\varepsilon (\cdot,t) \|_{L^{p^\ast/2}} + \| \boldsymbol{u}(\cdot,t) \|_{L^{p^\ast}} \| (P^\varepsilon - P) (\cdot,t) \|_{L^{p_1}(B_R)} \\
& \leq C \| \boldsymbol{u}^\varepsilon - \boldsymbol{u} \|_{L^\infty(0,T;L^{p_2}(B_R))} \| q_0 \|_{L^p}^2 + C  \| P^\varepsilon - P \|_{L^\infty(0,T;L^{p_1}(B_R))} \| q_0 \|_{L^p}.
\end{align*}
Owing to $p_1 \in (1, 6/5)$ and $p_2 \in (1,3/2)$, the desired convergences hold. It remains to show that
\begin{equation*}
R^\eps \equiv \boldsymbol{u}^\varepsilon \cdot \left( h^\eps \ast ((\boldsymbol{u}^\varepsilon)^\perp q^\varepsilon) - (\boldsymbol{u}^\varepsilon)^\perp \omega^\varepsilon  \right)
\end{equation*}
converges to zero in $L^\infty(0,T; L^1(\mathbb{R}^2))$. It follows from \eqref{est:remainder} that
\begin{equation}
\| R^\eps(\cdot,t)  \|_{L^1} \leq \| \boldsymbol{u}^\varepsilon(\cdot,t) \|_{L^{p^\ast}} \| (h^\eps \ast (\boldsymbol{u}^\varepsilon q^\varepsilon) - \boldsymbol{u}^\varepsilon \omega^\varepsilon )(\cdot,t) \|_{L^{(p^\ast)'}} \leq C \eps^\alpha \| q_0 \|_{L^p}^3, \label{est:R-eps}
\end{equation}
in which $\alpha$ is given by
\begin{equation*}
\alpha = 1 + \frac{2}{(p^\ast)'} - \frac{4}{p} = 6\left( \frac{2}{3} - \frac{1}{p} \right).
\end{equation*}
Thus, we find $\| R^\eps \|_{L^\infty(0,T;L^1(\mathbb{R}^2))} \rightarrow 0$ as $\eps \rightarrow 0$ for $3/2< p < 2$.

%%%%%%%%%%%%%%%%%%%%%%%%%%%%%%%%%%%%%%%%%%
%%%%%%%%%%  Proof for Theorem 2 %%%%%%%%%%
%%%%%%%%%%%%%%%%%%%%%%%%%%%%%%%%%%%%%%%%%%
\subsection{Proof of Theorem~\ref{thm-main2}}

The proof for the convergence to the Euler equations is the same as Section~\ref{proof:conv-Euler}, see Remark~\ref{rem:conv-Euler}. We show the convergence of the energy dissipation rate. Similarly to the case of $3/2 < p <2$, we have
\begin{equation*}
| \mathscr{D}_E^\varepsilon(t) | \leq \frac{1}{2} \int_{\mathbb{R}^2} |(\nabla H_G^\varepsilon) (\boldsymbol{y})| \| \boldsymbol{u}^\varepsilon(\cdot, t) - \boldsymbol{u}^\varepsilon(\cdot - \boldsymbol{y}, t) \|_{L^3} \| q^\varepsilon(\cdot,t) q^\varepsilon(\cdot - \boldsymbol{y},t) \|_{L^{3/2}} d \boldsymbol{y}.
\end{equation*}
Thus, we find
\begin{equation*}
| \mathscr{D}_E^\varepsilon(t) | \leq C \| q^\varepsilon(\cdot,t) \|_{L^{3/2}}^2  \| w_\alpha \nabla H_G^\varepsilon \|_{L^3}.
\end{equation*}
Here,
\begin{align*}
\| w_\alpha \nabla H_G^\varepsilon \|_{L^3} &\leq C \left( \int_{\mathbb{R}^2}  \frac{|\boldsymbol{x}|^{3(\alpha-1)}}{(|\boldsymbol{x}|/\eps + 1)^3} d\boldsymbol{x} \right)^{1/3}  = C \eps^{\alpha -1/3} \left(  \int_0^\infty \frac{r^{3\alpha - 2}}{(r + 1)^3} dr \right)^{1/3},
\end{align*}
and the right-hand side is finite for $\alpha \in (1/3,1]$. Thus, we obtain
\begin{equation*}
| \mathscr{D}_E^\varepsilon(t) | \leq C \eps^{\alpha -1/3} \| q_0 \|_{L^{3/2}}^2,
\end{equation*}
so that $\| \mathscr{D}_E^\varepsilon \|_{L^\infty(0,T)}$ converges to zero in the $\eps \rightarrow 0$ limit.

As for the local energy balance, it is easily confirmed that, except for the estimate \eqref{est:R-eps}, the proof is the same as Section~\ref{proof:balance}. Thus, we show that $\| R^\eps \|_{L^\infty(0,T;L^1(\mathbb{R}^2))}$ converges to zero in the $\eps \rightarrow 0$ limit. We have
\begin{equation*}
\| R^\eps(\cdot,t)  \|_{L^1} \leq \| \boldsymbol{u}^\varepsilon(\cdot,t) \|_{L^\infty} \| (h^\eps \ast (\boldsymbol{u}^\varepsilon q^\varepsilon) - \boldsymbol{u}^\varepsilon \omega^\varepsilon )(\cdot,t) \|_{L^1}.
\end{equation*}
It follows from \eqref{est:u-eps-infty} that
\begin{equation*}
\| \boldsymbol{u}^\varepsilon(\cdot,t) \|_{L^\infty} \leq C \left( \| \omega^\varepsilon(\cdot,t) \|_{L^r} + \| \omega^\varepsilon(\cdot,t) \|_{L^1} \right) \leq C \eps^{-2 + 2 /s} (\| q_0 \|_{L^{3/2}} + \| q_0 \|_{L^1})
\end{equation*}
for any $r \in (2, \infty]$ and $s \in (6/5,3)$ satisfying $1 + 1/r = 1/s + 2/3$. On the other hand, the proof of Lemma~\ref{lem1} implies
\begin{align*}
\| (h^\eps \ast (\boldsymbol{u}^\varepsilon q^\varepsilon) - \boldsymbol{u}^\varepsilon \omega^\varepsilon )(\cdot,t) \|_{L^1} &\leq \| q_0 \|_{L^{3/2}} \int_{\mathbb{R}^2} |h^\eps (\boldsymbol{y})| \| \boldsymbol{u}^\eps(\cdot - \boldsymbol{y},t) - \boldsymbol{u}^\eps(\cdot,t) \|_{L^3} d \boldsymbol{y} \\
& \leq C \| q_0 \|_{L^{3/2}} \int_{\mathbb{R}^2} |\boldsymbol{y}|^\alpha |h^\eps (\boldsymbol{y})| d \boldsymbol{y} \\
&= C \eps^{\alpha} \| q_0 \|_{L^{3/2}} \| w_\alpha h\|_{L^1}.
\end{align*}
Thus, we obtain
\begin{equation*}
\| R^\eps(\cdot,t)  \|_{L^1} \leq C(q_0,T) \eps^{2 /r - 4/3 + \alpha}.
\end{equation*}
Since $r \in (2, \infty]$ is an arbitrary constant, we set $2/r = 1 - \delta$ for  $\delta \in (0, \alpha - 1/3)$. Then, we have
\begin{equation*}
\frac{2}{r} - \frac{4}{3} + \alpha = \left(\alpha - \frac{1}{3} \right) - \delta > 0,
\end{equation*}
so that we conclude $\| R^\eps \|_{L^\infty(0,T;L^1(\mathbb{R}^2))} \rightarrow 0$ as $\eps \rightarrow 0$.

%%%%%%%%%%%%%%%%%%%%%%%%%%%%%%%%
%%%%%%%%%%  Appendix  %%%%%%%%%%
%%%%%%%%%%%%%%%%%%%%%%%%%%%%%%%%
\appendix
\section{Properties of the auxiliary function $H_G^\eps$} \label{appendix_A}

We see detailed properties of $H_G^\eps$ defined by \eqref{def-H_G-eps}. Note that
\begin{equation*}
H_G^\eps =  h^\varepsilon \ast \left(h^\eps \ast G - G \right)  =  G \ast \left( h^\eps \ast h^\eps - h^\eps \right) = G \ast H^\eps,
\end{equation*}
where $H^\eps(\boldsymbol{x}) \equiv \eps^{-2} H (\eps^{-1} \boldsymbol{x})$ and $H \equiv h \ast h - h$. Since $h$ is a radial function, we find that $H$, $H^\eps$ and $H_G^\eps$ are also radially symmetric. To emphasize that, those radial functions are denoted by $H_r$, $H_r^\eps$ and $H_{G,r}^\eps$. In what follows, we assume that $h \in L^1(\mathbb{R}^2)$ satisfies $w_3 h$, $w_\alpha h \in L^\infty(\mathbb{R}^2)$ for some $\alpha \in (0,1)$.

Recall that $G$ is a fundamental solution to the 2D Laplacian. Then, we have $\Delta H_G^\eps = H^\eps$, that is,
\begin{equation*}
\nabla H_G^\eps (\boldsymbol{x}) = (\nabla G) \ast H^\eps = \frac{\boldsymbol{x}}{|\boldsymbol{x}|^2} \int_0^{|\boldsymbol{x}|} s H_r^\eps(s) ds,
\end{equation*}
which is rewritten by
\begin{equation*}
\nabla H_G^\eps (\boldsymbol{x}) = \frac{\boldsymbol{x}}{|\boldsymbol{x}|} (H_{G,r}^\eps)' (|\boldsymbol{x}|), \qquad (H_{G,r}^\eps)' (r) = \frac{1}{r} \int_0^{r} s H_r^\eps(s) ds.
\end{equation*}
Integrating $(H_{G,r}^\eps)'$ on $[0,r]$, we obtain
\begin{align*}
H_{G,r}^\eps(r) - H_{G,r}^\eps(0) &= \int_0^r \frac{1}{s} \int_0^{s} t H_r^\eps(t) dt ds \\
& = \left[ \log{s} \int_0^{s} t H_r^\eps(t) dt \right]_0^r - \int_0^r (\log{s}) s H_r^\eps(s) ds.
\end{align*}
Note that $w_\beta h \in L^\infty(\mathbb{R}^2)$ yields $w_\beta H \in L^\infty(\mathbb{R}^2)$ for any $\beta > 0$. Then, we have
\begin{equation*}
H_{G,r}^\eps(0) = \int_{\mathbb{R}^2} G(\boldsymbol{y}) H^\eps(\boldsymbol{y}) d \boldsymbol{y} = \int_0^\infty (\log{s}) s H^\eps_r(s) ds,
\end{equation*}
and the right-hand side is finite. Thus, we obtain
\begin{align*}
H_{G,r}^\eps(r) &= \log{r} \int_0^{r} s H_r^\eps(s) ds + \int_r^\infty (\log{s}) s H_r^\eps(s) ds \\
& = \log{r} \int_0^{r / \eps } s H_r(s) ds + \int_{r / \eps}^\infty (\log{s}) s H_r(s) ds + \log{\eps} \int_{r / \eps}^\infty s H_r(s) ds.
\end{align*}
Since $\int_{\mathbb{R}^2} h (\boldsymbol{x}) d \boldsymbol{x} = 1$ yields $\int_0^\infty t H_r(t) dt = 0$, we have the following expression of $H_{G,r}^\eps$.
\begin{equation*}
H_{G,r}^\eps(r)  = \log{\left( r /\eps \right)} \int_0^{r / \eps } s H_r(s) ds + \int_{r / \eps}^\infty (\log{s}) s H_r(s) ds.
\end{equation*}

We show that, for any fixed $\eps$, the functions $H_{G,r}^\eps$ and $(H_{G,r}^\eps)'$ belong to $C_0[0, \infty)$. Setting $\rho = r /\eps$ for convenience, we have
\begin{equation*}
H_{G,r}^\eps(r) = \int_0^\rho \log{\left( \frac{\rho}{s} \right)} s H_r(s) ds + \int_0^\infty (\log{s}) s H_r(s) ds.
\end{equation*}
It follows from $w_\alpha H \in L^\infty(\mathbb{R}^2)$ that
\begin{align*}
|H_{G,r}^\eps(r)| = \rho \int_0^{\rho} | H_r(s)| ds + |H_{G,r}^{\eps =1}(0)| \leq C_\alpha \rho^{2 - \alpha} + |H_{G,r}^{1}(0)|.
\end{align*}
Thus, $H_{G,r}^\eps$ is bounded for $\rho \leq 1$. For $\rho > 1$, we have
\begin{equation*}
H_{G,r}^\eps(r)  = \int_\rho^\infty \log{\left( \frac{s}{\rho} \right)} s H_r(s) ds = \int_1^\infty (\log{s}) (\rho s) H_r(\rho s) \rho ds,
\end{equation*}
and
\begin{equation*}
| H_{G,r}^\eps(r) | \leq  \frac{1}{\rho} \| w_3 H \|_{L^\infty} \int_1^\infty \frac{\log{s}}{s^2} ds = \frac{\eps}{r} \| w_3 H \|_{L^\infty}.
\end{equation*}
Thus, we conclude $H_{G,r}^\eps \in C_0 [0, \infty)$. As for the derivative $(H_{G,r}^\eps)'$, note that
\begin{equation*}
(H_{G,r}^\eps)' (r) = \frac{1}{r} \int_0^{r} s H_r^\eps(s) ds = \frac{1}{\eps \rho} \int_0^\rho s H_r (s) ds = \frac{1}{\eps} \int_0^1 (\rho s) H_r (\rho s) ds.
\end{equation*}
Then, we have $|(H_{G,r}^\eps)' (r)| \leq \eps^{-1} \| w_1 H \|_{L^\infty}$, that is, $(H_{G,r}^\eps)' \in L^\infty(0,\infty)$. We show the following decay estimate of $(H_{G,r}^\eps)'$, which is suggested in \cite{Liu}.
\begin{equation}
\left| (H_{G,r}^\eps)' (r) \right| \leq \frac{C}{r} \frac{\eps}{r + \eps}. \label{est:dH_gr-eps}
\end{equation}
Set $(H_{G,r}^\eps)' (r) = I(r/\eps) / r$, where $I(r)$ is defined by
\begin{equation*}
I(r) = \int_0^{r} t H_r(t) dt.
\end{equation*}
Then, it is sufficient to show that $(1 + r) |I(r)|$ is uniformly bounded for $r \in [0,\infty)$. Considering $h \in L^1(\mathbb{R}^2)$ and $\int_0^\infty t H_r(t) dt = 0$, we find $I \in L^\infty(0,\infty)$ and
\begin{align*}
r |I(r)| & = r \left| \int_r^\infty t H_r(t) dt  \right| \leq \| w_3 H \|_{L^\infty} \int_1^\infty \frac{1}{t^2} dt = \| w_3 H \|_{L^\infty}.
\end{align*}
Thus, we obtain \eqref{est:dH_gr-eps}, so that $(H_{G,r}^\eps)' \in C_0[0, \infty)$.

%%%%%%%%%%  Acknowledgement  %%%%%%%%%%
\subsection*{Acknowledgements}
This work was supported by JSPS KAKENHI Grant Number JP19J00064 and JP21K13820.

%%%%%%%%%%%%%%%%%%%%%%%%%%%%%%%%%
%%%%%%%%%%  Reference  %%%%%%%%%%
%%%%%%%%%%%%%%%%%%%%%%%%%%%%%%%%%

\end{document}